	\definecolor{lightgray}{gray}{0.8}
	\newcolumntype{L}{>{\raggedleft}p{0.14\textwidth}}
	\newcolumntype{R}{p{0.8\textwidth}}
\newtheorem*{rep@theorem}{\rep@title}
\newcommand{\newreptheorem}[2]{%
\newenvironment{rep#1}[1]{%
 \def\rep@title{#2 \ref{##1}}%
 \begin{rep@theorem}}%
 {\end{rep@theorem}}}
\newtheorem{theorem}{Theorem}
\newtheorem{claim}{Claim}
\newtheorem{lemma}{Lemma}
\newtheorem{proposition}{Proposition}
\theoremstyle{remark}\newtheorem{remark}{Remark}
\theoremstyle{definition}\newtheorem{definition}{Definition}
\newcommand{\R}{\mathbb R}
\newcommand{\Z}{\mathbb Z}
\newcommand{\id}{\operatorname{id}}
\newcommand{\Stab}{\operatorname{Stab}}
\def\ga{\gamma}
\def\Ga{\Gamma}
\def\De{\Delta}
\def\Si{\Sigma}
\def\3{\ss}
\def\acts{\curvearrowright}
\def\D{\partial}
\def\geo{\partial_{\infty}}
\def\interior{\operatorname{int}}
\def\pihalf{\frac{\pi}{2}}
\def\2pithird{\frac{2\pi}{3}}
\def\tangle{\angle_{Tits}}
\begin{document}

\title{An obstruction to the smoothability 
of singular nonpositively curved metrics 
on 4-manifolds by patterns of incompressible tori}
\author{Stephan Stadler \thanks{E-mail address: \texttt{sstadler@math.uni-koeln.de}}}

\maketitle

\begin{abstract}
We give new examples of closed smooth 4-manifolds 
which support singular metrics of nonpositive curvature, 
but no smooth ones, 
thereby answering affirmatively a question of Gromov. 
The obstruction comes from patterns of incompressible 2-tori 
sufficiently complicated to force branching of geodesics
for nonpositively curved metrics. 
\end{abstract}

\tableofcontents

\section{Introduction}

The goal of this note is to exhibit new examples of closed smooth 4-manifolds 
which support singular metrics of nonpositive (sectional) curvature, 
but no smooth ones. 
Such manifolds had first been found by 
Davis, Januszkiewicz and Lafont \cite{DJL}. 
The approaches are different, 
but they both rely on a basic rigidity phenomenon in nonpositive curvature, 
namely that free abelian subgroups in the fundamental group 
of a closed nonpositively curved manifold 
are carried by totally-geodesically immersed flat tori. 
The fundamental groups $\Ga$ 
of the singular locally CAT(0) 4-manifolds $M$ 
studied in \cite{DJL} 
contain few (``isolated'') copies of $\Z^2$. 
The obstruction to the existence 
of a smooth nonpositively curved metric on $M$ is 
that they, respectively, 
the corresponding invariant flats in the universal covering $\widetilde M$, 
are {\em knotted at infinity}. 
This is impossible for 2-flats in smooth Hadamard 4-manifolds. 
We consider fundamental groups $\Ga$ 
which contain {\em plenty} of copies of $\Z^2$ 
and exploit the fact that this rigidifies the geometry 
of nonpositively curved metrics on $M$, singular or smooth, 
since it enforces a complicated pattern of 
immersed flat 2-tori. 
Extreme cases occur in ``higher rank'': 
When $\Ga$ splits as a product $\Ga_1\times\Ga_2$ of subgroups, 
then the universal cover splits as a metric product. 
Or (in dimensions $\geq5$) 
when $\Ga$ is the fundamental group of an
irreducible higher rank locally symmetric space 
of noncompact type, 
then the geometry of nonpositively curved metrics is completely rigidified 
by Mostow rigidity, 
i.e.\ it is essentially unique up to rescaling.
We consider ``rank one'' situations 
where there are still plenty of subgroups isomorphic to $\Z^2$
which however only partially rigidify the geometry. 
Heuristically, 
singular nonpositively curved metrics 
allow more complicated patterns of tori than smooth ones 
because, due to possible branching, 
the tori can be packed ``more densely''. 
It is therefore conceivable that sufficiently complicated patterns 
which occur for singular nonpositively curved metrics
cannot occur in the smooth case
because they enforce the branching of geodesics. 
Indeed, natural candidates 
to which this line of reasoning could apply 
had been pointed out by Gromov 
in (the first exercise of) \cite{BGS}, 
namely the fundamental groups of branched coverings 
$M\to\Si\times\Si$ 
of products of higher genus surfaces $\Si$ with themselves 
with branching locus the diagonal. 
The purpose of this note is to do this exercise.
More precisely, we prove

\begin{theorem}[Exercise 1 in \cite{BGS}]\label{mainthm}
Let $V$ be a closed 4-dimensional manifold which admits a non-trivial finite branched covering $\beta:V\rightarrow\Sigma\times\Sigma$ 
over the product of a hyperbolic surface $\Sigma$ with itself 
such that the 
branching locus equals the diagonal $\Delta_\Sigma\subset\Sigma\times\Sigma$. 
Then $V$ admits no smooth Riemannian metric
of nonpositive sectional curvature. 
\end{theorem}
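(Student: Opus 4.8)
Write $\Ga=\pi_1(V)$ and let $k\ge 2$ be the ramification order of $\beta\colon V\to\Si\times\Si$ along (a component of) the diagonal $\De_\Si$. For orientation, note first that $V$ \emph{does} carry a singular nonpositively curved metric: pull back the product metric of $\Si\times\Si$ under $\beta$. Since $\De_\Si$ is totally geodesic for the product metric, $\beta$ merely replaces each normal $2$-disc to $\De_\Si$ by a Euclidean cone of angle $2\pi k\ge 2\pi$; the link of a point on the branch locus becomes a spherical join of a round sphere with a circle of length $2\pi k$, hence stays $\mathrm{CAT}(1)$, so the pulled back metric is locally $\mathrm{CAT}(0)$. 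The content of the theorem is therefore the non-existence of a \emph{smooth} such metric, and from now on I assume for contradiction that $g$ is a smooth metric of nonpositive sectional curvature on $V$, with universal cover $X=\widetilde V$ a Hadamard $4$-manifold on which $\Ga$ acts cocompactly by isometries. The structural input I will use is the one recalled in the introduction: every free abelian subgroup of $\Ga$ is carried by a totally geodesically immersed flat torus; equivalently, for $A\cong\Z^2\le\Ga$ the minimal set $\mathrm{Min}(A)\subset X$ is nonempty and splits as $\R^2\times Y$ with $A$ acting cocompactly as a lattice of translations on the $\R^2$-factor.

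The next step is to extract, from the branched covering alone, the metric-independent pattern of incompressible surfaces in $V$ that the proof exploits. Every pair of closed geodesics $\ga_1,\ga_2$ on $\Si$ gives a totally geodesic flat torus $\ga_1\times\ga_2\subset\Si\times\Si$, and under $\beta^{-1}$ these produce, in $V$: honest incompressible flat tori when $\ga_1\cap\ga_2=\emptyset$; incompressible closed surfaces of genus $\ge 1$, carrying singular flat metrics with cone points of angle $2\pi k$ on the branch surface $B:=\beta^{-1}(\De_\Si)$, when $\ga_1\ne\ga_2$ meet; and, for the \emph{diagonal} tori $\ga\times\ga$ (which contain the closed geodesic $\ga_\De:=\{(x,x):x\in\ga\}\subset\De_\Si$), a degenerate ``book'' of $2k$ flat half-planes hinged along the closed geodesic $\hat\de\subset B$ covering $\ga_\De$ — since the flat of $\H^2\times\H^2$ covering $\ga\times\ga$ is the unique flat through a given lift of $\ga_\De$ and is tangent to only one of the two normal directions to $\De_\Si$. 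In the singular metric these books are totally geodesic, and, grouping opposite pages, the minimal set of the corresponding element $\hat h\in\Ga$ contains $k$ distinct totally geodesic flats meeting pairwise exactly along $\hat\de$. Together with $B$ itself, the incidence pattern of all of these subsets in $V$ is a topological invariant of $V$, and it faithfully mirrors the self-intersection pattern of closed geodesics on $\Si$ — which, on a hyperbolic surface, is arbitrarily complicated.

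The heart of the proof — and the step I expect to be by far the hardest — is to show that this pattern cannot be realised by totally geodesic and flat representatives in a smooth Hadamard $4$-manifold $X$ without forcing a geodesic to branch. The mechanism I would try to run: starting from a finite sub-pattern of closed geodesics on $\Si$ meeting transversally along a common diagonal geodesic, use the Flat Torus Theorem for the flat tori that survive, together with the forced position of the genus-$\ge 1$ branched surfaces and of $B$, to pin down the configuration of totally geodesic flats in $X$ around the closed geodesic representing $\hat h$. The branched-covering combinatorics welds the relevant pieces cyclically along $\hat\de$ while $B$ supplies an extra transverse direction to be accommodated; I would then argue, by a local-geometry and dimension count in the $4$-manifold, that no arrangement of the required totally geodesic subsets around $\hat\de$ is possible unless two geodesics issue from a point of $\hat\de$ with a common initial segment but then diverge — i.e.\ unless geodesics branch, impossible in a Hadamard manifold. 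Equivalently, one would exhibit in the Tits boundary $\tits X$ a configuration of isometrically embedded $2\pi$-circles through a common pair of antipodes that cannot occur for a smooth nonpositively curved $4$-manifold.

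The main obstacles, in increasing order of difficulty, are: (i) determining precisely which copies of $\Z^2$ survive in $\Ga=\pi_1(V)$, and how the genus-$\ge 1$ branched surfaces sit with respect to them and to $B$ — note that the preimages of the diagonal tori are \emph{not} tori, so one cannot simply quote the Flat Torus Theorem in the diagonal directions and the argument must route through the branched surfaces; (ii) converting the topological incidence pattern into a rigid \emph{metric} configuration of totally geodesic flats near $\hat\de$ in $X$, controlling how $B$ (equivalently, the surface subgroup $\pi_1(B)\le\Ga$) lies relative to these flats; and (iii) the final rigidity argument producing an honest branching geodesic, or the forbidden configuration in $\tits X$. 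It is essential throughout that $\dim V=4$: in higher dimension there is enough transverse codimension to accommodate the required flats without forcing branching, just as the ``knotting at infinity'' obstruction of \cite{DJL} is special to dimension $4$.
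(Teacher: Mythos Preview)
Your strategy identifies the right phenomenon --- the ``book'' of half-planes along a lift of a diagonal geodesic --- but the mechanism you propose for turning it into a contradiction has real gaps, and it diverges from the paper's argument at the points that matter.

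First, the branching configuration alone is not enough. Your plan is to take a \emph{closed} diagonal geodesic $\ga_\De$, look at the book of half-planes around its lift $\hat\de$, and derive a contradiction by a ``local geometry and dimension count''. But in a Hadamard $4$-manifold there is no local obstruction to several $2$-flats sharing a geodesic: the minimal set of the cyclic subgroup $\langle\hat h\rangle$ can legitimately split as $\R\times T$ with $T$ a tree, accommodating arbitrarily many half-planes. What the paper does instead is to play this branching off against a \emph{second}, incompatible product structure. It chooses a decomposition $\Si=\Si^+\cup\Si^-$ along closed geodesics, so that the block $\Si^+\times\Si^-$ misses $\De_\Si$ and lifts to a genuine convex product $Y^+\times Y^-\subset X$, invariant under a product $\Ga_1\times\Ga_2$ of nonabelian free groups. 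The branching flats are required to enter this product region through quadrants $r^+\times r^-$; the mismatch of the two product decompositions then forces the intersection to be flat, producing flat half-strips in the $2$-dimensional factors $Y_i$, and a short lemma about half-strips asymptotic to periodic geodesics in CAT(0) surfaces yields the contradiction. Your outline has no analogue of this second product piece, and without it the ``dimension count'' step is not an argument.

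Second, your focus on \emph{periodic} diagonal geodesics is a problem for the transfer step. You correctly note that $\beta^{-1}(\ga\times\ga)$ is not a torus, so the Flat Torus Theorem does not hand you the book in the smooth metric; but you do not say how to get it there. The paper's solution is to work with a carefully chosen \emph{non}-periodic simple geodesic $c\subset\Si$ that crosses $\Si^+\cap\Si^-$ exactly once and is a pointed Hausdorff limit of simple closed geodesics; the resulting flats in $\mathcal F_1$ are then shown (by a separate lemma) to be limits of $\Ga$-periodic flats. Transfer to an arbitrary CAT(0) model is then done by a $\Ga$-equivariant quasi-isometry, using: product rigidity for $\Ga_1\times\Ga_2$ to find $Y'_1\times Y'_2$; uniform Hausdorff-closeness of (limits of) maximal periodic flats to recover $Z'\times\R$; and a quasi-isometry-invariant notion of \emph{coarse intersection} of flats to force $Y'_1\times Y'_2$ and $Z'\times\R$ to actually meet in the smooth target. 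None of this machinery appears in your sketch, and your step~(ii) --- ``converting the topological incidence pattern into a rigid metric configuration'' --- is exactly where it is needed.

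In short: the missing idea is the auxiliary product block $Y^+\times Y^-$ coming from a surface decomposition, together with the non-periodic/periodically-approximable diagonal geodesic that threads it; and the missing technique is the quasi-isometric transfer via product rigidity and coarse intersection. Your Tits-boundary formulation in the last paragraph is in the right spirit, but making it precise leads back to exactly these ingredients.
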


The above theorem is an application of a more general result (Theorem \ref{thm:obst}), which provides
an obstruction for a discrete group $\Ga$ to act geometrically on 
a Hadamard manifold. The obstruction comes from the existence 
of a CAT(0) model space $X_{model}$ which contains a
specific ``singular configuration'' and
admits a geometric action\\ $\Ga\acts X_{model}$. 

In short, the singular configuration in $X_{model}$ consists of two rigid convex subsets.
Their rigidity ensures that one finds corresponding convex subsets in every CAT(0) space which allows 
for a geometric action by $\Ga$. The singular nature of $\Ga$ is reflected in
the way these two sets interact. On the one hand, their correlation forces 
the presence of branching geodesics. On the other hand, they are inseparable, in 
the sense that their interaction persists when passing to corresponding sets in 
other CAT(0) spaces with geometric actions by $\Ga$. Consequently, {\em any} CAT(0)
space which permits a geometric action by $\Ga$ has to be singular.

\textbf{Acknowledgments}. The results in this paper are part of 
my upcoming thesis \cite{thesis}.
I am very grateful to my advisor, Bernhard Leeb, for 
calling my attention to the first exercise in \cite{BGS} and
for his support and guidance during the last years. I would 
also like to thank Alexander Lytchak and Richard Bamler for 
many related and unrelated discussions
as well as the geometry group at LMU 
for providing an inspiring working environment.

\section{Preliminaries}

For notations and basics on CAT(0) spaces 
we refer the reader to the 
first two chapters of \cite{B} and Section 2
of \cite{KL}.

\subsection{Quasi-isometry invariance of flats}

A {\em flat} $F$ in a CAT(0) space $X$ is a convex subset 
isometric to a Euclidean space. If a flat has dimension
$k$, then we will also call it a {\em k-flat}.

If $\Ga\acts X$ is an isometric action, 
then a flat $F\subset X$ is called \emph{$\Gamma$-periodic} 
if its stabilizer $\Stab_\Gamma(F)$ acts cocompactly on it. 
In case of a discrete action, 
a finite index subgroup of $\Stab_\Gamma(F)$ 
acts on $F$ by translations, 
and hence $\Stab_\Gamma(F)$ 
is virtually free abelian
of rank equal to the dimension of $F$. 

An isometric action $\Ga\acts X$ 
of a discrete group $\Ga$ on a locally compact CAT(0) space $X$ 
is called {\em geometric} 
if it is properly discontinuous and cocompact.
Then every abelian subgroup $A\subset\Ga$ 
preserves a flat in $X$ on which it acts cocompactly. 

Suppose that $\Ga\acts X$ and $\Ga\acts X'$ are geometric actions 
of the same group on two locally compact CAT(0) spaces. 
Then there exists a $\Ga$-equivariant quasi-isometry $\Phi:X\rightarrow X'$. 

The following properties of CAT(0) spaces will imply Proposition \ref{prop:uniformHcl} 
which will be used further in the proof.

If $F\subset X$ is a $\Ga$-periodic flat, 
then its stabilizer $\Stab_\Gamma(F)$ is virtually abelian 
and preserves a flat $F'\subset X'$. 
Hence, 
a $\Ga$-equivariant quasi-isometry $\Phi:X\rightarrow X'$ 
carries $\Ga$-periodic flats in $X$ 
Hausdorff close to $\Ga$-periodic flats in $X'$. 

One can also say something 
regarding the quasi-isometry invariance of {\em non}-periodic flats: 
Recall that, 
for locally compact CAT(0) spaces with cocompact isometry group, 
the maximal dimension of flats 
equals the maximal dimension of quasi-flats
and is in particular a quasi-isometry invariant 
\cite[Thm.\ C]{K}. 
By Theorem B in \cite{LS}, 
quasi-flats of maximal dimension which are within finite Hausdorff 
distance from (maximal) flats are actually within uniformly bounded 
Hausdorff distance from these flats.
Combining these results, 
one obtains the following useful 
\begin{proposition}\label{prop:uniformHcl}
There exists a constant $D=D(L,A,X,X')$ such that
a $\Ga$-equivariant $(L,A)$-quasi-isometry $\Phi:X\to X'$, 
between CAT(0) spaces $X$ and $X'$,
maps $\Gamma$-periodic flats of maximal dimension in $X$ 
$D$-Hausdorff close to such flats in $X'$. 
As a consequence, 
also pointed Hausdorff limits 
of $\Gamma$-periodic flats of maximal dimension in $X$ 
are carried $D$-Hausdorff close to such flats in $X'$.
\end{proposition}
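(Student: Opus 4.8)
The plan is to derive the first assertion from Theorem~B of \cite{LS}, using $\Ga$-equivariance to produce, for each periodic flat, an honest flat in $X'$ at finite Hausdorff distance from its image, and then to obtain the statement about limits by a compactness argument.

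First I would fix a $\Ga$-periodic flat $F\subset X$ of maximal dimension $n$ and choose a finite index subgroup $A\leq\Stab_\Ga(F)$ isomorphic to $\Z^n$ which acts cocompactly on $F$. Since $\Ga\acts X'$ is geometric, the abelian group $A$ preserves a flat $F'\subset X'$ on which it acts cocompactly; as $n$ is the maximal flat dimension, which is a quasi-isometry invariant by \cite[Thm.~C]{K}, the flat $F'$ has dimension exactly $n$, and it is $\Ga$-periodic because $A\leq\Stab_\Ga(F')$ already acts cocompactly on it. Using that $\Phi$ is $A$-equivariant I would then compare the $A$-orbit of a point $x\in F$ (which is cobounded in $F$) with its $\Phi$-image $A\cdot\Phi(x)$ and with the $A$-orbit of the nearest-point projection of $\Phi(x)$ to $F'$: the latter orbit is cobounded in $F'$ and, by convexity and $A$-invariance of $F'$, stays within $d(\Phi(x),F')$ of $A\cdot\Phi(x)$. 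This forces $\Phi(F)$ and $F'$ to lie at \emph{finite} Hausdorff distance from one another; the bound so obtained may depend on $F$, but that is harmless for the next step.

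Now $\Phi|_F$ is an $(L,A)$-quasi-isometric embedding of $\R^n$, so $\Phi(F)$ is an $(L,A)$-quasi-flat of maximal dimension at finite Hausdorff distance from the maximal flat $F'$; Theorem~B of \cite{LS} then upgrades this to a bound $d_{\mathrm{Haus}}(\Phi(F),F')\leq D_0$ with $D_0=D_0(L,A,X')$ independent of $F$. This is the first assertion. For the statement about limits, let $F_i\subset X$ be $\Ga$-periodic maximal flats converging in the pointed Hausdorff topology to $F_\infty$, fix $p_i\in F_i$ in a bounded ball with $p_i\to p_\infty\in F_\infty$, and for each $i$ choose as above a $\Ga$-periodic maximal flat $F_i'\subset X'$ with $d_{\mathrm{Haus}}(\Phi(F_i),F_i')\leq D_0$. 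Since $\Phi$ sends bounded sets to bounded sets, each $F_i'$ meets a fixed bounded ball, so by properness of $X'$ a subsequence of the $F_i'$ converges in the pointed Hausdorff topology to a flat $F_\infty'$, which is by construction a pointed Hausdorff limit of $\Ga$-periodic maximal flats in $X'$. To compare $\Phi(F_\infty)$ with $F_\infty'$ I would take $x\in F_\infty$, approximate it by $x_i\in F_i$ with $x_i\to x$, observe that $\Phi(x_i)$ stays in a fixed ball and within $D_0$ of $F_i'$, and let $i\to\infty$; conversely, a point of $F_\infty'$ is approximated by $y_i\in F_i'$, each within $D_0$ of $\Phi(z_i)$ for some $z_i\in F_i$ lying in a fixed ball, whence $z_i$ subconverges into $F_\infty$. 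Since a quasi-isometry need not be continuous, passing from $\Phi(x_i)$ to $\Phi(x)$ (resp.\ from $\Phi(z_i)$ to $\Phi$ of the limit of the $z_i$) costs an extra additive $L+A$, so one gets $d_{\mathrm{Haus}}(\Phi(F_\infty),F_\infty')\leq D_0+L+A$; taking $D:=D_0+L+A$ throughout serves both assertions.

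The step I expect to be the main obstacle is precisely this first reduction. A priori $\Phi(F)$ is only \emph{some} quasi-flat of maximal dimension, with nothing forcing it near an honest flat; what makes \cite[Thm.~B]{LS} applicable is the combination of $\Ga$-equivariance and $\Ga$-periodicity, which pins $\Phi(F)$ to the flat $F'$ supplied by the action of $\Ga$ on abelian subgroups of the target. The remaining ingredients — the orbit comparison and the compactness argument for limits — are routine, the only care needed being the non-continuity of $\Phi$, which is the source of the additive slack $L+A$.
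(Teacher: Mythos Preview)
Your argument is correct and follows exactly the route the paper sketches in the paragraphs preceding the proposition: use the Flat Torus Theorem together with $\Ga$-equivariance to place $\Phi(F)$ at finite Hausdorff distance from a $\Ga$-periodic flat $F'\subset X'$, then invoke \cite[Thm.~B]{LS} for uniformity, and finally pass to pointed Hausdorff limits by compactness. You have simply fleshed out the details (the orbit comparison, the limit argument, and the $L+A$ slack from non-continuity) that the paper leaves implicit; the only cosmetic issue is that you use the symbol $A$ both for the abelian subgroup and for the additive quasi-isometry constant.
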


\subsection{Product rigidity}

Recall that an isometry of a CAT(0) space is called {\em axial},
if it preserves a complete geodesic on which it acts as a nontrivial translation.
Such a geodesic is then called an {\em axis}.
We will need the following product splitting result which is a special case of Corollary 10 in \cite{M}.
See also Proposition 2.2 in \cite{L} and Theorem 1 in \cite{S}.

\begin{proposition}\label{prop:split}
Let $X$ be a locally compact CAT(0) space 
and let $\Gamma\cong\Gamma_1\times\Gamma_2$ be a product of non-abelian 
free groups $\Gamma_i$. Suppose that $\Gamma$ acts on $X$ discretely 
by axial isometries. Then there exists a minimal non-empty  
$\Gamma$-invariant closed convex subset $C\subset X$ which splits 
metrically as a product, $C\cong C_1\times C_2$, such that $\Gamma$ preserves the product 
splitting and $\Gamma_i$ acts trivially on $C_{3-i}$.
\end{proposition}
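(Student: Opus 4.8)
As the statement indicates, this is a special case of \cite[Cor.\ 10]{M} (compare also \cite[Prop.\ 2.2]{L} and \cite[Thm.\ 1]{S}), so the plan is to reconstruct that argument in the present setting. It runs in two steps: first pass to a minimal invariant convex subset, then recognize it as a metric product by combining commutation, the Flat Torus Theorem, and the structure of parallel sets.

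\emph{Step 1: a minimal invariant convex subset.} Fix nontrivial elements $\ga_i\in\Ga_i$. Since $\Ga\cong\Ga_1\times\Ga_2$, the subgroup $\langle\ga_1,\ga_2\rangle$ is isomorphic to $\Z^2$ and acts by axial isometries, so the Flat Torus Theorem applies: $M:=\mathrm{Min}(\ga_1)\cap\mathrm{Min}(\ga_2)$ is nonempty, closed and convex, splits metrically as $M\cong M_0\times\R^2$, and $\langle\ga_1,\ga_2\rangle$ acts trivially on $M_0$ and cocompactly by translations on $\R^2$. Nearest-point projection onto a nonempty $\Ga$-invariant closed convex subset $C\subseteq X$ is $1$-Lipschitz and $\Ga$-equivariant; applying it to an axis of $\ga_1$, and then inside $\mathrm{Min}(\ga_1)\cap C$ to an axis of $\ga_2$, one obtains $C\cap M\neq\emptyset$, and since the convex hull of a cocompact-lattice orbit inside a Euclidean factor is that whole factor, $C$ must contain a $2$-flat inside $M$ on which $\langle\ga_1,\ga_2\rangle$ acts cocompactly. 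With this input, the general existence theory of minimal invariant convex subsets (as underlies \cite[Cor.\ 10]{M}; one checks in particular that $\Ga$, being generated by axial isometries with non-abelian free factors, fixes no point of $\partial_\infty X$) furnishes a minimal nonempty $\Ga$-invariant closed convex subset $C$. Replacing $X$ by $C$, we may assume the $\Ga$-action is minimal.

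\emph{Step 2: the product splitting.} For each $1\neq\ga_1\in\Ga_1$ the group $\Ga_2$ centralizes $\ga_1$ and hence preserves $\mathrm{Min}(\ga_1)\cong Y\times\R$ together with this splitting, acting trivially on the $\R$-factor after one also intersects with $\mathrm{Min}(\ga_2)$; symmetrically with $\Ga_1$ and $\ga_2\in\Ga_2$. Letting $\ga_1$ range over $\Ga_1$ and $\ga_2$ over $\Ga_2$ produces a $\Ga_1$-invariant family of ``horizontal'' complete geodesics and a $\Ga_2$-invariant family of ``vertical'' ones which, by the structure of parallel sets in CAT(0) spaces (\cite[Ch.\ II.2 and II.6]{B}), are everywhere mutually orthogonal. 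The crux is to promote this to a metric product $C\cong C_1\times C_2$ in which $\Ga_i$ acts on $C_i$ and trivially on $C_{3-i}$: minimality of the $\Ga$-action shows that the horizontal and vertical directions jointly span $C$ and that the decomposition is $\Ga$-invariant, while the hypothesis that the $\Ga_i$ are \emph{non-abelian} --- so that $\Ga$ contains a copy of $F_2\times F_2$, which by Bieberbach cannot act discretely on a Euclidean space --- excludes a residual flat factor on which, say, $\Ga_2$ could act by nontrivial translations in a horizontal direction; such a factor is precisely what would violate the asserted triviality of the $\Ga_i$-actions on the complementary sets.

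\emph{Main obstacle.} Step 1 is essentially formal once the Flat Torus Theorem is in hand. The real difficulty is the globalization in Step 2 --- converting two orthogonal, complementary invariant families of geodesics into a genuine metric splitting --- which is exactly the content of \cite[Cor.\ 10]{M}. I expect the cleanest route is to verify that the minimal $\Ga$-action on $C$ meets the hypotheses of that corollary (minimality, no fixed point at infinity, the product structure of $\Ga$, and absence of an obstructing Euclidean factor --- all of which the hypotheses on $\Ga$ are designed to guarantee) and to invoke it directly, rather than reproving the CAT(0) splitting machinery from scratch.
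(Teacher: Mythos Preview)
Your proposal is essentially the paper's own approach: the paper does not prove this proposition at all but simply records it as a special case of \cite[Cor.~10]{M} (with parallel references to \cite[Prop.~2.2]{L} and \cite[Thm.~1]{S}), and after your sketch you likewise conclude that the cleanest route is to invoke that corollary directly. Your Steps~1--2 give more context than the paper does, but since you yourself flag Step~2 as the nontrivial part and defer it to Monod, the net content matches the paper's citation.
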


\begin{remark}
If $X$ is 4-dimensional, then the set $C$ is unique. Indeed,
the factors $C_i$ would have to be 2-dimensional. Since
two minimal non-empty $\Gamma$-invariant closed convex 
subsets are parallel and $X$ is 4-dimensional, $C$ is unique.
\end{remark}

\subsection{Coarse intersection of flats and quasi-isometry invariance}

For a subset $A$ of a metric space $X$ we denote 
its closure by $\overline{A}$ and its tubular $r$-neighborhood
by $N_r(A)$.

Let $F_1,F_2\subset X$ be flats. 
We say that they {\em diverge} 
if $\partial_{\infty} F_1\cap\partial_{\infty} F_2=\emptyset$. 
Equivalently, 
the distance function $d(\cdot,F_2)|_{F_1}$ is proper 
and grows (at least) linearly. 

\begin{definition}\label{def:sta}
Let $F_1,F_2\subset X$ be diverging flats. 
We say that $F_1$ {\em coarsely intersects} $F_2$ 
if there exists $R\geq 0$ such that for every $r\geq R$ holds: 
If $B_1\subset F_1$ is a round ball such that 
$F_1\cap\overline{ N_r(F_2)}\subset\interior(B_1)$, 
then its boundary sphere $\partial B_1$ is not contractible 
inside $X\setminus\overline{ N_r(F_2)}$. 
\end{definition}

\begin{remark}
(i) 
This is independent of the choice of the ball $B_1\subset F_1$.

(ii) 
The notion is asymptotic 
in the sense that it only depends on the ideal boundaries of the 
flats, i.e.\ passing to parallel flats does not affect coarse intersection.

(iii) 
Coarse intersection is not a symmetric relation.

(iv) 
In general, disjoint flats can coarsely intersect. 
However, this cannot occur in geodesically complete smooth spaces, i.e.\ in Hadamard manifolds. 
\end{remark}

We need a criterion to recognize whether flats coarsely intersect. In the 
smooth case ``coarse intersection'' simply becomes
``nontrivial transversal intersection'', i.e.\
two flats in a Hadamard manifold intersect coarsely 
if and only if they intersect transversely in one point. 
This is clear, because for a flat $F$ in a Hadamard manifold $X$ 
there is a deformation retraction of $X\setminus F$ onto $X\setminus \overline{N_r(F)}$ 
using the gradient flow of $d(\cdot,F)$. 

More generally, we have:
\begin{lemma}\label{lem:intcrit}
Let $F_1$ and $F_2$ be flats in a CAT(0) space $X$. 
Suppose that 
$F_2$ is contained in an open convex subset $C\subset X$ which is Riemannian, 
i.e.\ the metric on $C$ is induced by a smooth Riemannian metric. 
If
$F_1$ and $F_2$ intersect transversely in one point, then 
$F_1$ coarsely intersects $F_2$. 
\end{lemma}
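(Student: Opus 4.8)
The plan is to reduce the statement to the smooth picture already described for Hadamard manifolds, but localized inside the Riemannian convex set $C$. Since $F_2\subset C$ and $C$ is open, convex and Riemannian, the distance function $d(\cdot,F_2)$ restricted to $C$ behaves as in the smooth case: on $C\setminus F_2$ it has a well-defined gradient flow, and for every $r$ small enough that $\overline{N_r(F_2)}\subset C$ this flow gives a deformation retraction of $C\setminus F_2$ onto $C\setminus\overline{N_r(F_2)}$. I would first record this, noting that convexity of $C$ together with CAT(0) convexity of $d(\cdot,F_2)$ guarantees the flow stays inside $C$ and that $\overline{N_r(F_2)}$ is convex, hence its complement in $C$ (and in $X$) is connected and the retraction is well behaved.

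Next I would set up the topological test from Definition \ref{def:sta}. Because $F_1$ and $F_2$ diverge and meet transversally in a single point $p$, the function $d(\cdot,F_2)|_{F_1}$ is proper, so for each large $r$ the sublevel set $F_1\cap\overline{N_r(F_2)}$ is a compact set containing $p$, and I can pick a round ball $B_1\subset F_1$ with this sublevel set in its interior; transversality of the intersection means that, as $r\to\infty$, the pair $(B_1,\partial B_1)$ is, up to the retraction, the pair (small transverse disk through $p$, its boundary sphere). So the claim to prove is exactly that $\partial B_1$ is not null-homotopic in $X\setminus\overline{N_r(F_2)}$.

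I would prove non-contractibility by an algebraic-topology linking argument. Suppose $\partial B_1$ bounded a disk $g:D\to X\setminus\overline{N_r(F_2)}$. Compose with the nearest-point projection $\pi:X\to F_2$ (which is $1$-Lipschitz by CAT(0) convexity and maps $X\setminus\overline{N_r(F_2)}$ into $F_2\cong\R^{k}$, where $k=\dim F_2$), obtaining a disk in $F_2$ whose boundary is $\pi(\partial B_1)$; since the intersection of $F_1$ and $F_2$ is transverse at $p$, one checks that $\pi|_{B_1}$ restricted near the boundary is homotopic (within $F_2\setminus\{p\}$, after also projecting away the $F_2$-coordinate of $p$) to a map of nonzero degree onto a sphere in $F_2\setminus\{p\}$, i.e. $[\pi(\partial B_1)]\neq 0$ in $\pi_{\dim B_1 -1}(F_2\setminus\{\text{pt}\})$ or in the corresponding reduced homology. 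This contradicts the existence of the filling disk. The transversality is what feeds in here: it ensures the local intersection number at $p$ is $\pm1$, so the projected boundary sphere genuinely generates the homology of the punctured Euclidean space $F_2$ of the complementary dimension.

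The main obstacle I expect is making the degree/linking bookkeeping honest when $X$ is merely CAT(0) and not a manifold: away from $C$ the space $X$ can be wild, so one cannot talk about intersection numbers globally, and the argument must be phrased purely in terms of the projection $\pi:X\to F_2$ and the homotopy class of $\pi\circ g$ in a punctured Euclidean space. Care is also needed to check that $\pi$ actually sends $X\setminus\overline{N_r(F_2)}$ into $F_2$ minus a neighborhood of the image of $p$ (so that the relevant homology class is detected), which uses that $F_1$ and $F_2$ diverge and that the single intersection point projects to an interior point of the transverse disk; and one must verify that the choice of $B_1$ is immaterial, as asserted in Remark (i), so that the conclusion is the genuine coarse-intersection property rather than an artifact of a particular ball.
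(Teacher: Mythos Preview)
Your linking argument via the nearest-point projection $\pi:X\to F_2$ does not work, and the obstacle you flag at the end is fatal rather than a technicality. The map $\pi$ does \emph{not} send $X\setminus\overline{N_r(F_2)}$ into $F_2\setminus\{p\}$: already in the flat model $C=\R^{n_1+n_2}$ with $F_1=\R^{n_1}\times 0$ and $F_2=0\times\R^{n_2}$ one has $\pi(F_1)=\{p\}$, so $\pi(\partial B_1)$ is the constant map to $p$ and detects nothing. Even if $\pi(\partial B_1)$ happened to miss $p$, a hypothetical filling $g:D\to X\setminus\overline{N_r(F_2)}$ would project to a disk $\pi\circ g$ in the contractible space $F_2\cong\R^{k}$; there is no reason for $\pi\circ g$ to avoid $p$, so no contradiction arises. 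Your first paragraph, the gradient retraction of $C\setminus F_2$ onto $C\setminus\overline{N_r(F_2)}$, is purely internal to $C$ and says nothing about null-homotopies in $X\setminus\overline{N_r(F_2)}$, which is what Definition~\ref{def:sta} requires; in particular it is never linked to the global argument in your third paragraph.

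The paper uses a different retraction, and this is the missing idea: the CAT(0) nearest-point projection onto the closed convex set $\overline C$, not onto $F_2$. Since $F_2$ lies in the open set $C$, points outside $\overline C$ project to $\partial C\subset\overline C\setminus F_2$, and the retracting geodesics lie in $X\setminus C\subset X\setminus F_2$; hence $X\setminus F_2$ deformation retracts onto $\overline C\setminus F_2$. Now one is entirely inside a Riemannian piece in which $F_2$ is a totally geodesic submanifold met transversally by $F_1$ at $p$, and the standard smooth linking shows $\partial B_1$ is not null-homotopic in $\overline C\setminus F_2$, hence not in $X\setminus F_2$, hence not in any $X\setminus\overline{N_r(F_2)}$.
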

\begin{proof}
Otherwise 
spheres in $F_1\setminus F_2$ around the intersection point $F_1\cap F_2$ 
could be contracted in $X\setminus F_2$. 
But this would be absurd 
since $X\setminus F_2$ retracts to $\overline C\setminus F_2$ along normal geodesics. 
\end{proof}

It will be crucial for us
that coarse intersection is quasi-isometry invariant.

\begin{lemma}\label{lem:corinv}
Let $\Phi:X\to X'$ be a quasi-isometry  of CAT(0) spaces
with a quasi-inverse $\Phi':X'\to X$. 
Let $F_1,F_2\subset X$ and $F'_1,F'_2\subset X'$ be flats 
such that $\Phi(F_i)$ is Hausdorff close to $F'_i$. 
Then $F_1$ coarsely intersects $F_2$ if and only if  
$F'_1$ coarsely intersects $F'_2$.
\end{lemma}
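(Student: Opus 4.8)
The plan is to show that the defining condition in Definition \ref{def:sta} is essentially preserved under the quasi-isometry $\Phi$, after translating the ``round ball'' language into homotopy-theoretic language that is manifestly quasi-isometry invariant. First I would reformulate coarse intersection: $F_1$ coarsely intersects $F_2$ precisely when, for all sufficiently large $r$, the inclusion-induced map $\pi_{k_1-1}\bigl(F_1\setminus\overline{N_r(F_2)}\bigr)\to\pi_{k_1-1}\bigl(X\setminus\overline{N_r(F_2)}\bigr)$ is nontrivial on the class of the boundary sphere $\partial B_1$ (here $k_1=\dim F_1$). Since $F_1\setminus\overline{N_r(F_2)}$ is, up to homotopy, a sphere $S^{k_1-1}$ (the flats diverge, so $F_1\cap\overline{N_r(F_2)}$ is a bounded convex set and its complement in $F_1$ deformation retracts onto a large sphere), this is really a statement about whether a specific $(k_1-1)$-sphere bounds in the complement of a metric neighborhood of $F_2$.

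The key step is a ``coarsification'' argument. Given the quasi-isometry $\Phi\colon X\to X'$ with quasi-inverse $\Phi'$, and given that $\Phi(F_i)$ is Hausdorff close to $F'_i$, I would first fix constants so that $\Phi\bigl(\overline{N_r(F_2)}\bigr)$ lies between $\overline{N_{r/L-c}(F'_2)}$ and $\overline{N_{Lr+c}(F'_2)}$ for appropriate $L,c$; the same holds for $\Phi'$ going back. Next, to transport the homotopy I need a \emph{geodesic-retraction trick}: in a CAT(0) space, the nearest-point projection onto a closed convex set (or the radial contraction toward it) is $1$-Lipschitz, so $X\setminus\overline{N_r(F_2)}$ deformation retracts onto $X\setminus\overline{N_{r'}(F_2)}$ for any $r'\ge r$ via the (normalized) gradient flow of $d(\cdot,F_2)$ pushed outward — this lets me freely enlarge the excised neighborhood without changing the homotopy type of the complement. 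Combining: if $\partial B_1\subset F_1$ is \emph{not} contractible in $X\setminus\overline{N_r(F_2)}$, I push it forward by $\Phi$ to get a sphere near $F'_1$ in $X'\setminus\overline{N_{r/L-c}(F'_2)}$; were this contractible there, I could pull the contracting disk back by $\Phi'$ into $X$, land (after enlarging $r$ by the distortion factor) in $X\setminus\overline{N_{r''}(F_2)}$, and — since $\Phi'\circ\Phi$ is boundedly close to $\id_X$ and spheres in this range are homotopic through small tracks — obtain a contraction of $\partial B_1$ in $X\setminus\overline{N_{r''}(F_2)}$, which by the retraction trick contradicts noncontractibility at level $r$. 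This proves one implication; the converse is symmetric, using $\Phi'$ in place of $\Phi$.

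The main obstacle I expect is \textbf{controlling the homotopies quantitatively}: a quasi-isometry is not continuous, so one cannot literally push a sphere or a disk through $\Phi$. The standard fix is to replace $\Phi$ by a continuous, coarsely equivalent map (using the fact that $X$ is a geodesic space — interpolate $\Phi$ over a simplicial approximation, extending skeleton by skeleton using geodesics in the target $X'$, which is contractible and has a convexity structure controlling the size of the images). One must check that this continuous approximation still carries $F_1\setminus\overline{N_r(F_2)}$ into a bounded neighborhood of $F'_1\setminus\overline{N_{r'}(F'_2)}$ for a controlled $r'$, and that the composite approximations of $\Phi$ and $\Phi'$ are homotopic to the identity through maps of bounded displacement — so that the homotopy can be ``absorbed'' by enlarging the excised tube via the CAT(0) retraction. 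Once this bookkeeping is set up, the argument is the schematic push-pull above; the Hausdorff-closeness hypothesis and the $1$-Lipschitz nearest-point projections in CAT(0) spaces are exactly what make all the constants work out.
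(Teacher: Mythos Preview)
Your approach is essentially the paper's: replace $\Phi$ by a continuous approximation, move spheres between $F_1$ and $F'_1$ via short straight-line homotopies (using Hausdorff closeness), and transport a contracting disk through the quasi-inverse $\Phi'$, using that $\Phi'\circ\Phi$ is boundedly close to $\id_X$. The paper does exactly this, only it argues in the contrapositive (assuming $F'_1$ does \emph{not} coarsely intersect $F'_2$ and concluding the same for $F_1,F_2$).

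There is, however, one genuine gap. Your ``geodesic-retraction trick'' --- that $X\setminus\overline{N_r(F_2)}$ deformation retracts onto $X\setminus\overline{N_{r'}(F_2)}$ for $r'\ge r$ via an outward gradient flow of $d(\cdot,F_2)$ --- is not available in a general CAT(0) space: it requires geodesics normal to $F_2$ to extend indefinitely, i.e.\ geodesic completeness, which is not assumed. (The paper invokes such a flow only in the Riemannian setting, for the criterion in Lemma~\ref{lem:intcrit}.) In your scheme the retraction is exactly what lets you pass from ``contractible at the degraded level $r''\approx r/L^2$'' back up to ``contractible at level $r$'', so the argument as written does not close.

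The fix is simple and is what the paper does. Observe that the negation of coarse intersection is monotone: if large spheres of $F_1$ contract in $X\setminus\overline{N_r(F_2)}$ for arbitrarily large $r$, then (since complements only grow as $r$ decreases) they contract for \emph{every} $r$. So, assuming $F'_1$ does not coarsely intersect $F'_2$, first fix the desired radius $r$ in $X$, then choose $r'$ in $X'$ large enough that the continuous $\Phi'$-image of $X'\setminus\overline{N_{r'}(F'_2)}$ lands in $X\setminus\overline{N_r(F_2)}$, and only then run the push--contract--pull argument. No outward retraction is needed.
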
 
\begin{proof}
First note that $F_1$ and $F_2$ diverge if and only if $F'_1$ and $F'_2$ do.
The quasi-flat $\Phi|_{F_1}$ may not be continuous, 
but since $X$ is CAT(0), 
it is uniformly (in terms of the quasi-isometry constants) 
Hausdorff close to a continuous quasi-flat $q:F_1\to X'$.
Suppose that $q(F_1)$ is $D$-Hausdorff close to $F'_1$.
If $F'_1$ does not coarsely intersect $F'_2$, then for 
every tubular neighborhood $N_r(F'_2)$ of $F'_2$ all
large spheres in $F'_1$ are contractible in the complement
of $\overline{N_r(F'_2)}$. Because the $q$-image of a sphere in 
$F_1$ can be homotoped to $F'_1$ by a $D$-short homotopy, we obtain that 
$q$-images of large spheres in $F_1$ are contractible in 
$X\setminus \overline{N_r(F'_2)}$.
The $\Phi'$-image of a (contracting) homotopy is again 
uniformly Hausdorff close to a continuous map. 
Since 
$\Phi'\circ\Phi$ is at finite distance from $\id_X$, 
it follows that we can for every radius $r>0$ 
contract sufficiently large spheres in $F_1$ 
in the complement of the tubular $r$-neighborhood of $F_2$. 
Consequently, 
$F_1$ does not coarsely intersect $F_2$.
\end{proof}

\section{Configurations of convex product subsets in dimension 4}

\subsection{Flat half-strips in CAT(0) surfaces with symmetries}

By a {\em flat strip}, respectively, {\em half-strip} 
of {\em width} $w\geq0$ in a CAT(0) space 
we mean a convex subset isometric to $\R\times[0,w]$, respectively, 
to $[0,+\infty)\times[0,w]$. 

The following observation restricts the possible positions 
of flat half-strips in a CAT(0) surface 
relative to the action of its isometry group. 
\begin{lemma}\label{lem:dim2}
Let $Y$ be a smooth CAT(0) surface, 
and let $h\subset Y$ be a flat half-strip. 
Suppose that $h$ is asymptotic to a periodic geodesic $c\subset Y$, 
i.e.\ to an axis $c$ of an axial isometry $\ga$ of $Y$. 

Then either $w=0$, 
or $h$ extends to a (periodic) flat strip in $Y$ parallel to $c$. 
\end{lemma}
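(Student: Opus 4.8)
The plan is to argue by contradiction. Suppose $w>0$. Let $H\subset Y$ be the maximal flat half-strip of the form $[0,\infty)\times[0,w]$ containing $h$ with the same asymptotic behavior; by convexity of the distance function and the flat strip theorem, one may assume the boundary line $[0,\infty)\times\{0\}$ of $H$ lies at finite Hausdorff distance from the axis $c$, and in fact — since two asymptotic rays in a CAT(0) space bound a flat half-strip — we may arrange $[0,\infty)\times\{0\}\subset c$ by replacing $c$ with a parallel axis (the set of axes of $\ga$ is convex and splits as $c\times(\text{convex set})$). So $H$ is a half-strip with one full boundary edge sitting on the axis $c$ of the axial isometry $\ga$.

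The key point is a flat-plane / area argument exploiting the $\ga$-action together with the \emph{two-dimensionality} of $Y$. Consider the translates $\ga^n(H)$ for $n\in\Z$. Since $\ga$ translates along $c$ and $c$ is an edge of each translate, the union $\bigcup_{n\in\Z}\ga^n(H)$ is a $\ga$-invariant convex set (convexity of the union follows because consecutive translates overlap along a sub-strip of $c$ and each is flat of the same width on the same side of $c$ — here one uses that $Y$ is a surface, so ``one side of $c$'' is unambiguous and the translates cannot splay apart). This union is therefore a $\ga$-invariant flat region in $Y$. Now either this region already contains a full flat strip $\R\times[0,w']$ for some $0<w'\le w$ parallel to $c$, in which case — applying the flat strip theorem once more inside the surface — $h$ extends to a periodic flat strip parallel to $c$ and we are done; or the region is genuinely a ``half-plane'' sector based on $c$.

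The hard part will be ruling out the genuine half-plane: a $\ga$-invariant isometrically embedded Euclidean half-plane $E=\R\times[0,\infty)$ with $\R\times\{0\}=c\subset Y$, where $\ga$ acts by translation along the boundary. This is where smoothness of $Y$ enters decisively. On a smooth CAT(0) surface the boundary geodesic $c$ of such a half-plane would have to be totally geodesic \emph{and} the half-plane flat, forcing the Gaussian curvature of $Y$ to vanish identically on all of $E$; moreover the normal exponential map along $c$ into $Y$ from the $E$-side is an isometry onto $E$. One then pushes across $c$: by smoothness, a neighborhood of $c$ in $Y$ is the flat product $c\times(-\eps,\eps)$, so $E$ extends across $c$ to a strip $\R\times(-\eps,w]$, and re-running the maximality argument (the set of $t$ for which $\R\times[{-t},w]$ embeds flatly is open, closed, and nonempty) shows the extension continues until it fills a full periodic flat strip parallel to $c$ — contradicting maximality unless $h$ was contained in such a strip to begin with. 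Thus the only surviving case is $w=0$ or $h$ inside a periodic flat strip parallel to $c$, as claimed.

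I expect the main technical obstacle to be making the ``convex union of translates'' and the subsequent openness-closedness extension argument precise in the CAT(0) category without over-relying on the smooth structure too early; the cleanest route is probably to first establish, purely CAT(0)-theoretically, that $h$ lies in a maximal flat half-plane or strip based on $c$, and only then invoke smoothness of $Y$ to kill the half-plane alternative via the vanishing-curvature / local-product observation.
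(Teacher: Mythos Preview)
Your overall strategy---translate $h$ by powers of $\gamma$ and show the translates assemble into a $\gamma$-invariant flat strip---is the paper's strategy as well, but the first reduction step contains a real gap that keeps the argument from getting started.

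The assertion that ``two asymptotic rays in a CAT(0) space bound a flat half-strip'' is false: the Flat Strip Theorem requires complete lines parallel at both ends, whereas for rays asymptotic at one end the distance function is merely non-increasing (think of $\H^2$). Consequently you cannot arrange the boundary ray $r_0=[0,\infty)\times\{0\}$ of $H$ to lie on an axis of $\gamma$: the ray $r_0$ is asymptotic to $c$, so $d(r_0(t),c)$ is convex and bounded, hence non-increasing, but nothing forces it to be constant, and there is no reason for $r_0$ to sit in $\operatorname{Min}(\gamma)$ or for any parallel copy of $c$ to contain it. Without this normalization the translates $\gamma^n H$ share no edge on $c$ and their union has no evident convex or flat structure, so the argument you yourself flag as delicate never begins. (The half-plane alternative is also confused: a $\gamma$-invariant Euclidean half-plane bounded by $c$ already contains full flat strips $\R\times[0,w']$, so that case is vacuous; and in any event vanishing curvature on a closed half-plane does not force a flat neighborhood of its boundary geodesic in a smooth Hadamard surface, so the continuation argument fails.)

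The paper bypasses the reduction entirely. Take any ray $r$ in $\interior(h)$ and observe that $\gamma^{-1}r$ is \emph{strongly} asymptotic to $r$: after arranging that $\gamma$ translates towards the ideal endpoint of $h$ and preserves the transverse orientation along $c$, both $r$ and $\gamma^{-1}r$ lie on the same side of $c$ with identical distance profile $d(\gamma^{-1}r(t),c)=d(r(t),c)\to d_\infty$, and in a smooth surface the normal exponential map along $c$ is injective on each side, which forces $d(\gamma^{-1}r(t),r)\to 0$. Since $Y$ is $2$-dimensional, $\interior(h)$ is \emph{open} in $Y$, so the strongly asymptotic ray $\gamma^{-1}r$ must eventually enter it; inside this flat open set two strongly asymptotic straight rays coincide on a sub-ray, hence $\gamma^{-1}r$ extends $r$ and $\gamma^{-1}h$ extends $h$. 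Iterating gives the $\gamma$-invariant flat strip containing $h$. No preliminary placement of $h$ relative to $c$ is needed, and the only role of smoothness is that $\interior(h)$ is open in $Y$.
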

\begin{proof}
We may assume that $\ga$ translates towards the ideal endpoint of $h$ 
and preserves the orientation transversal to $c$. 
If $w>0$ and $r(t)$ is a ray in $\interior(h)$, 
then the ray $\gamma^{-1}r$ is strongly asymptotic to $r$, 
i.e.\ $d(\gamma^{-1}r(t),r)\to0$ as $t\to+\infty$. 
Therefore, $\gamma^{-1}r$ must enter $\interior(h)$,
because $\interior(h)$ is open in $Y$. 
Consequently, 
$\gamma^{-1}r$ extends $r$, 
and $\gamma^{-1}h$ extends $h$.
It follows by induction that $h$ is contained in a $\ga$-invariant 
flat strip. 
\end{proof}

\subsection{Configurations not occuring in smooth spaces}
\label{sec:confnotoccsm}

Let $X$ be a CAT(0) space. 

We describe a configuration of convex product subsets
which can occur if $X$ is singular, 
but not if it is smooth.

We assume that $X$ contains two closed convex subsets, 
namely a product 
\begin{equation*}
Y_1\times Y_2
\end{equation*}
of smooth CAT(0) surfaces $Y_1$ and $Y_2$ with boundary 
such that $\interior(Y_1)\times\interior(Y_2)$ is open in $X$; 
and a product 
\begin{equation*}
Z\times\R
\end{equation*}
whose (not necessarily smooth) cross section $Z$ 
contains an ideal triangle 
with three ideal vertices $\eta_0,\eta_+,\eta_-$. 
We denote the sides asymptotic to $\eta_0$ and $\eta_{\pm}$ by $l_\mp$ 
and the side asymptotic to $\eta_+$ and $\eta_-$ by $l_0$.

\begin{figure}[ht]
\begin{center}
\psset{xunit=1.0cm,yunit=1.0cm,algebraic=true,dimen=middle,dotstyle=o,dotsize=3pt 0,linewidth=0.8pt,arrowsize=3pt 2,arrowinset=0.25}
\scalebox{0.7}{
\begin{pspicture*}(-3.204192352216151,-3.3543901994838143)(8.668142142557365,6.44718827875942)
\parametricplot{2.919612924257734}{3.9668104754543316}{1.*8.16272013485701*cos(t)+0.*8.16272013485701*sin(t)+11.8|0.*8.16272013485701*cos(t)+1.*8.16272013485701*sin(t)+3.66}
\parametricplot{5.014008026650929}{6.0612055778475264}{1.*8.162720134857008*cos(t)+0.*8.162720134857008*sin(t)+-4.124871130596426|0.*8.162720134857008*cos(t)+1.*8.162720134857008*sin(t)+7.254228634059951}
\parametricplot{0.8252178218645385}{1.8724153730611361}{1.*8.16272013485701*cos(t)+0.*8.16272013485701*sin(t)+0.7248711305964259|0.*8.16272013485701*cos(t)+1.*8.16272013485701*sin(t)+-8.334228634059949}
\rput[tl](3.997436905272435,5.8){$\eta_0$}
\rput[tl](-2.3,-0.6013485767087883){$\eta_+$}
\rput[tl](6.612428126945448,-2.3269785904840056){$\eta_-$}
\rput[tl](1.3,2.3286846458825683){$l_{-}$}
\rput[tl](4.388579708394818,1.70745784092349){$l_{+}$}
\rput[tl](2.2487984913135453,-0.6473653770761274){$l_{0}$}
\rput[tl](5.331924115925272,3.341054253964029){\large$Z$}
\end{pspicture*}}
\caption{The ideal triangle contained in $Z$.}\label{fig1}
\end{center}
\end{figure}

We assume furthermore, 
that these product subsets 
interact as follows \footnote{See Figure \ref{fig2}.}: 

{\bf(i)} 
The intersection of the flat $F_{\pm}=l_{\pm}\times\R\subset Z\times\R$ 
with $Y_1\times Y_2$ contains a quadrant $r^{\pm}_1\times r^{\pm}_2$, 
where $r^{\pm}_i$ are asymptotic rays in $Y_i$. 
We denote their common ideal endpoint by $\xi_i\in\geo Y_i$. 

{\bf(ii)} 
$\eta_0$ is an {\em interior} point of the Tits arc $\xi_1\xi_2$ 
of length $\pihalf$ in $\geo X$. 

\begin{figure}[ht]
\begin{center}
\psset{xunit=1.0cm,yunit=1.0cm,algebraic=true,dimen=middle,dotstyle=o,dotsize=3pt 0,linewidth=0.8pt,arrowsize=3pt 2,arrowinset=0.25}
\scalebox{0.7}{
\begin{pspicture*}(-2.9090526229496714,-5.623670432318329)(13.979826430103431,7.399634123578131)
\pscircle(4.64,0.32){5.084761547998098}
\psline(6.52,2.76)(2.76,-2.76)
\psline(2.76,2.76)(2.76,-2.76)
\psline(6.52,2.76)(6.52,-2.76)
\psline(4.64,2.76)(4.64,0.14267134192816533)
\psline[linewidth=1.8pt](4.64,2.76)(4.64,0.1567504337735337)
\psline[linewidth=1.8pt](4.64,-2.76)(4.64,-0.1567504337735337)
\psline(4.64,5.404761547998098)(4.64,2.76)
\psline(4.64,-2.76)(4.64,-4.764761547998098)
\psline(2.76,2.76)(4.498289364447486,0.20804327347071228)
\psline(6.52,-2.76)(4.781710635552513,-0.20804327347071228)
\psline(4.64,2.76)(5.236524726698814,1.0085018662885878)
\psline(5.3032016153789945,0.8127271718659304)(6.52,-2.76)
\psline(6.52,2.76)(5.629186259983677,0.14441923144143676)
\psline(5.547717938602044,-0.0947856270833598)(5.302064301120079,-0.816066520115512)
\psline(5.205215978879483,-1.1004296790347086)(4.64,-2.76)
\psline(2.76,2.76)(3.9861671941362715,-0.8402355912937334)
\psline(4.049213268023108,-1.0253495954721021)(4.64,-2.76)
\psline(4.64,2.76)(4.057324537974843,1.049165664692092)
\psline(3.9744425394807537,0.8058100095392358)(3.7483129989580712,0.1418551884300836)
\psline(3.655593454070926,-0.13038517740876943)(2.76,-2.76)
\rput[tl](4.9,3){$\xi_1$}
\rput[tl](4.9,-2.5){$\xi_2$}
\rput[tl](4.9,0.3){$\eta_0$}
\rput[tl](9.9,0.3){$\eta_+$}
\rput[tl](-0.2,0.3){$\eta_-$}
\rput[tl](5.9,3.2){$\geo Y_1$}
\rput[tl](5.9,-2.9){$\geo Y_2$}
\rput[tl](9,4){$\geo (Z \times \R)$}
\begin{scriptsize}
\psdots[dotstyle=*,linecolor=black](4.64,2.76)
\psdots[dotstyle=*,linecolor=black](4.64,-2.76)
\psdots[dotstyle=*,linecolor=black](-0.4346822560629363,0.)
\psdots[dotstyle=*,linecolor=black](9.714682256062934,0.)
\psdots[dotstyle=*,linecolor=black](4.64,0.1816624613696635)
\end{scriptsize}
\end{pspicture*}}
\caption{The configuration in $\geo X$.}\label{fig2}
\end{center}
\end{figure}

Then the intersection $Y_1\times Y_2 \cap Z\times\R$ is nonempty and, 
by condition (ii), 
the product structures (i.e.\ the directions of the factors) 
do not match on it. 
The latter implies 
that the convex subset $Y_1\times Y_2 \cap Z\times\R$ 
is {\em flat}.\footnote{This follows from the fact 
that a geodesic triangle in a product of CAT(0) spaces 
is flat if and only if its projections to the factors are flat.} 
As a consequence, 
subrays of the rays $r^{\pm}_i$ bound a {\em flat half-strip} 
$h_i\subset Y_i$.

In addition, 
we impose a {\em symmetry} condition: 

{\bf(iii)} The rays $r^{\pm}_i$ are asymptotic to a periodic geodesic 
$c_i\subset Y_i$. 

Using Lemma \ref{lem:dim2} above, we conclude:
Either subrays of the rays $r^{\pm}_i$ coincide, 
or subrays extend to geodesics $c^{\pm}_i\subset Y_i$ parallel to $c_i$. 

\begin{claim}\label{cl1}
If conditions (i)-(iii) hold, 
then $X$ cannot be smooth.
\end{claim}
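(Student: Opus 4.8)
The plan is to argue by contradiction: assuming $X$ is smooth, i.e.\ a Hadamard manifold, I will show that the two flats $F_+=l_+\times\R$ and $F_-=l_-\times\R$ are forced to be parallel --- which is absurd, because $\eta_-\in\geo F_+$ while $\eta_-\notin\geo F_-$. The point that makes smoothness indispensable is that in a Hadamard manifold two flats cannot ``branch'': a complete flat through a point $p$ equals $\exp_p$ of its tangent plane at $p$, so two complete flats that share a set of nonempty interior coincide. (In the singular model this fails precisely because of the branching of geodesics, which is why the configuration can exist there.)

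First I would use the dichotomy already obtained from Lemma \ref{lem:dim2} and condition {\bf(iii)}. For each $i$, after passing to subrays, either $r_i^+$ and $r_i^-$ have a common subray, or they extend to geodesics $c_i^\pm\subset Y_i$ parallel to $c_i$. In the first case the common subray is itself asymptotic to the periodic geodesic $c_i$, so a further application of Lemma \ref{lem:dim2} (to the flat half-strip it spans together with a subray of $c_i$) shows it lies on a complete geodesic parallel to $c_i$; denote this geodesic by $c_i^+=c_i^-$. Thus in every case $r_i^+$ (resp.\ $r_i^-$) lies, on a subray, on a complete geodesic $c_i^+\subset Y_i$ (resp.\ $c_i^-$), with $c_i^+$ parallel to $c_i^-$. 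Then $G_+:=c_1^+\times c_2^+$ and $G_-:=c_1^-\times c_2^-$ are $2$-flats in $Y_1\times Y_2$, they are parallel (a product of parallel geodesics is again parallel), and hence $\geo G_+=\geo G_-$ --- the common Tits circle being $\{\xi_1,\bar\xi_1\}*\{\xi_2,\bar\xi_2\}$, where $\bar\xi_i$ denotes the second endpoint of $c_i$, and it contains $\eta_0$ on the Tits arc from $\xi_1$ to $\xi_2$.

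Next I would identify $F_\pm$ with $G_\pm$. After shrinking, the quadrant $r_1^+\times r_2^+$ lies both in $F_+$ (condition {\bf(i)}) and in $G_+=c_1^+\times c_2^+$; it is a genuine flat quadrant, hence has nonempty interior in each of these flats, so by the no-branching fact $F_+=G_+$, and likewise $F_-=G_-$. Therefore $F_+$ is parallel to $F_-$ and $\geo F_+=\geo F_-$. But $l_+\times\{0\}\subset F_+$ is a geodesic asymptotic to $\eta_-$, so $\eta_-\in\geo F_+$, whereas $\geo F_-=\{\eta_0,\eta_+\}*\{\zeta_+,\zeta_-\}$ (with $\zeta_\pm$ the endpoints of the $\R$-factor) meets $\geo Z$ only in $\{\eta_0,\eta_+\}$, so $\eta_-\notin\geo F_-$ since $\eta_-$ is a third, distinct ideal vertex of the triangle lying in $\geo Z$. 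This contradicts $\geo F_+=\geo F_-$, and so $X$ is not smooth.

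The step I expect to carry the real weight --- and the only place where smoothness enters --- is $F_+=G_+$: the flats $l_+\times\R$, $l_-\times\R$ and $c_1\times c_2$ share the common quadrant and then diverge in a singular space, so one genuinely needs the absence of branching in Hadamard manifolds. The remaining care points are routine: verifying that $\geo(r_1^+\times r_2^+)$ is exactly the Tits arc $\xi_1\eta_0\xi_2$ (this is where condition {\bf(ii)} and uniqueness of Tits geodesics of length $<\pi$ are used), checking that passing to subrays changes nothing since all the notions involved are asymptotic, the bookkeeping of which ideal points lie on which Tits circle, and the interaction of Lemma \ref{lem:dim2} with the boundary of the surfaces $Y_i$.
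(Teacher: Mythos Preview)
Your overall strategy---use Lemma~\ref{lem:dim2} together with the non-branching of geodesics in a Hadamard manifold to force $F_+$ and $F_-$ to be parallel, then read off a contradiction at infinity---is in the right spirit, and your argument is correct in the two ``pure'' cases (subrays coincide for both $i$, giving $F_+=F_-$; or subrays extend for both $i$, giving $F_\pm=c_1^\pm\times c_2^\pm$). But there is a real gap in your handling of the first branch of the dichotomy. You assert that the common subray $r_i$ and a subray of $c_i$ ``span a flat half-strip'', so that Lemma~\ref{lem:dim2} may be applied a second time. Nothing in conditions (i)--(iii) provides this. The original half-strip $h_i$ was flat because it lay in the flat set $(Y_1\times Y_2)\cap(Z\times\R)$; the region between $r_i$ and $c_i$ lies merely in $Y_i$, which can be strictly negatively curved---indeed, in the example of Section~4 the surfaces $Y_i$ are convex pieces of $\H^2$ and contain no flat half-strip of positive width whatsoever. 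Two asymptotic rays in a CAT(0) surface need not bound anything flat, and a ray asymptotic to a periodic geodesic need not lie on any geodesic parallel to it. Consequently, in the mixed case---subrays coincide for one index and extend to parallels for the other---you cannot form the full flats $G_\pm=c_1^\pm\times c_2^\pm$, and the conclusion $\geo F_+=\geo F_-$ is unavailable.

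The paper circumvents this by asking for less than full parallelism. Whenever at least one index, say $j$, is in the ``extend'' case, smoothness yields only that $F_\pm$ contain parallel \emph{half}-planes $(\text{subray of }r_{3-j}^\pm)\times c_j^\pm$; hence $\geo F_+\cap\geo F_-$ contains a semicircle $\xi_j\xi_{3-j}\hat\xi_j$ of length $\pi$, where $\hat\xi_j$ is the second ideal endpoint of $c_j^\pm$. Since $\eta_0$ lies in the interior of the arc $\xi_1\xi_2$, its antipode in $\geo F_\mp$---which is $\eta_\pm$---satisfies $\tangle(\eta_\pm,\hat\xi_j)<\pihalf$, and therefore $\tangle(\eta_+,\eta_-)<\pi$, contradicting that $\eta_+,\eta_-$ are the endpoints of the geodesic $l_0\subset Z$. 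So the missing ingredient is not a second application of Lemma~\ref{lem:dim2} but this Tits-distance estimate, which handles the mixed case where only parallel half-planes (not parallel flats) are available.
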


\begin{proof}
Suppose that $X$ is smooth.
Then our discussion implies 
that the flats $F_{\pm}$ 
either have a quadrant in common and therefore coincide, 
or contain parallel half-planes and their intersection of ideal boundaries 
$\geo F_+\cap\geo F_-$ contains an arc of length $\pi$ 
of the form $\xi_1\xi_2\hat\xi_1$ or $\xi_2\xi_1\hat\xi_2$ 
with an antipode $\hat\xi_i\in\geo Y_i$ for $i=1$ or $2$. 
It follows that $\tangle(\eta_{\pm},\hat\xi_i)<\pihalf$ 
and hence $\tangle(\eta_+,\eta_-)<\pi$, a
contradiction. 
\end{proof}
 
\subsection{Not equivariantly smoothable configurations}
\label{sec:nequivsm}

Now, we restrict to {\em symmetric} situations 
and consider {\em geometric} actions
\begin{equation*}
\Ga\acts X
\end{equation*}
by discrete groups on locally compact CAT(0) spaces,
i.e.\ actions which are isometric, properly discontinuous and cocompact.

We will tie the configuration considered above 
sufficiently closely to the action
so that it will carry over to other geometric actions $\Ga\acts X'$ 
on CAT(0) spaces. 
This will then be used to rule out such actions 
on {\em smooth} CAT(0) spaces, 
i.e.\ on Hadamard 4-manifolds. 

In addition to the conditions (i)-(iii) above, we assume:

{\bf(iv)}
$X$ contains no 3-flats. 

{\bf(v)}
$Y_1\times Y_2$ is preserved by a subgroup $\Ga_1\times\Ga_2\subset\Ga$ 
with non-abelian free factors $\Ga_i$, 
and the restricted action $\Ga_1\times\Ga_2\acts Y_1\times Y_2$ 
is a product action 
(not necessarily cocompact). 

{\bf(vi)}
The flats $F_{\pm}$ and the flat $F_0=l_0\times\R$ in $Z\times\R$
are {\em $\Ga$-periodically approximable} 
(i.e.\ pointed Hausdorff limits of $\Ga$-periodic flats). 

{\bf(vii)}
The geodesics $c_i\subset Y_i$ are $\Ga_i$-periodic. 
Moreover, there exist $\Ga_i$-periodic geodesics $d_i\subset\interior(Y_i)$ 
which intersect the rays $r^{\pm}_i\subset Y_i$ transversally in points. 

\medskip
Under the assumptions (i)-(vii),
we look for a corresponding configuration in $X'$.
Let $\Phi:X\to X'$ denote a $\Ga$-equivariant quasi-isometry. 

By (v) and Proposition~\ref{prop:split}, 
there exists a $\Ga_1\times\Ga_2$-invariant closed convex product subset 
(in general singular)
\begin{equation*}
Y'_1\times Y'_2\subset X'
\end{equation*}
on which $\Ga_1\times\Ga_2$ acts by a product action. 
The $\Ga_i$-periodic image quasigeodesics $\Phi(c_i)$ are Hausdorff close 
to $\Ga_i$-periodic geodesics $c'_i\subset Y'_i$. 

By (iv+vi) and Proposition~\ref{prop:uniformHcl}, 
the quasi-flats $\Phi(F_{\pm})$ and $\Phi(F_0)$ 
are Hausdorff close to flats $F'_{\pm}$ and $F'_0$. 
We have that 
any one of these flats is contained in a tubular neighborhood 
of the union of the other two. 
Hence its ideal boudary circle 
is contained in the union of the ideal boudary circles of the other two. 
This leaves only the possibility that the union of their ideal boudaries 
is a spherical suspension of three points. 
It follows that the flats are contained in a closed convex product subset
\begin{equation*}
Z'\times\R\subset X'
\end{equation*}
whose cross section $Z'$ contains an ideal triangle 
with corresponding ideal vertices $\eta'_0,\eta'_+,\eta'_-$ 
and sides $l'_+,l'_-,l'_0$, 
such that 
$F'_{\pm}=l'_{\pm}\times\R$ 
and $F'_0=l'_0\times\R$. 
Furthermore, 
if $\rho\subset Z$ is a ray asymptotic to one of the ideal vertices 
$\eta_0,\eta_+$ or $\eta_-$, 
then $\Phi$ carries 
the vertical half-plane $\rho\times\R\subset Z\times\R$
Hausdorff close to a vertical half-plane $\rho'\times\R\subset Z'\times\R$
where $\rho'\subset Z'$ is a ray with corresponding ideal endpoint 
$\eta'_0,\eta'_+$ or $\eta'_-$. 
This follows from the fact that the half-plane $\rho\times\R$ 
is Hausdorff close to the intersection of sufficiently large 
tubular neighborhoods of two of the flats 
$F_{\pm}$ and $F_0$. 

Since the $c_i$ are periodic, 
$\Phi$ carries the quadrants $r^{\pm}_1\times r^{\pm}_2$
Hausdorff close to a quadrant $r'_1\times r'_2$ 
for rays $r'_i\subset c'_i$. 
The quadrants $r^{\pm}_1\times r^{\pm}_2$ 
are contained in vertical half-planes 
with ideal boundary semicircle $\geo F_+\cap\geo F_-$ 
and, by condition (ii), 
their ideal boundary arc $\xi_1\xi_2$ of length $\pihalf$
is contained in the interior of this semicircle.
Denoting the ideal endpoints of the rays $r'_i$ by $\xi'_i=\geo r'_i$ 
it follows that the arc $\xi'_1\xi'_2$ of length $\pihalf$ 
is contained in the {\em interior} of the 
semicircle $\geo F'_+\cap\geo F'_-$, 
and $\eta'_0$ is an {\em interior} point 
of the arc $\xi'_1\xi'_2$ of length $\pihalf$.

In summary, 
the interaction of the product subsets $Y'_1\times Y'_2$ and $Z'\times\R$ 
at infinity is as for the configuration in $X$. 
However, 
without further assumptions, 
the intersection
$Y'_1\times Y'_2 \cap Z'\times\R$ 
could be empty. 

\begin{claim}
$X'$ cannot be smooth Riemannian. 
\end{claim}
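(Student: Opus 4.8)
The plan is to argue by contradiction. Assume that $X'$ is smooth, i.e.\ a Hadamard $4$-manifold. The discussion preceding the claim has already carried the entire \emph{asymptotic} picture into $X'$: the closed convex product subsets $Y'_1\times Y'_2$ and $Z'\times\R$, the flats $F'_\pm=l'_\pm\times\R$ with $\xi'_1\xi'_2\subset\geo F'_+\cap\geo F'_-$, the $\Ga_i$-periodic geodesics $c'_i\subset Y'_i$ with $\xi'_i\in\geo c'_i$, and the fact that $\eta'_0$ is an interior point of the Tits arc $\xi'_1\xi'_2$ of length $\pihalf$. The goal is to promote this to an honest instance in $X'$ of the configuration of Section~\ref{sec:confnotoccsm} --- one satisfying conditions (i)--(iii) with $Y'_i$ smooth surfaces --- and then to invoke Claim~\ref{cl1}, which will then assert that $X'$ is \emph{not} smooth, a contradiction. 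Two points must be supplied. The first --- that $Y'_1,Y'_2$ are smooth CAT(0) surfaces with $\interior(Y'_1)\times\interior(Y'_2)$ open in $X'$ --- is immediate: since each $\Ga_i$ is non-abelian free it cannot act by axial isometries on a CAT(0) space of dimension $\le1$ (a point or a single geodesic), so $\dim Y'_i\ge2$; since $Y'_1\times Y'_2$ is convex in the $4$-manifold $X'$ this forces $\dim Y'_i=2$ and $Y'_1\times Y'_2$ to be of full dimension in $X'$, so $\interior(Y'_1)\times\interior(Y'_2)$ is open and carries the ambient smooth metric, whence the $Y'_i$ are smooth CAT(0) surfaces. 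The second point --- that $F'_\pm\cap(Y'_1\times Y'_2)$ contains a quadrant as in condition (i) --- is where condition (vii) is needed.

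To this end I would use the quasi-isometry invariance of coarse intersection. In $X$, the $\Ga$-periodic $2$-flat $d_1\times d_2$ lies in the open convex Riemannian subset $\interior(Y_1)\times\interior(Y_2)$ and, as $d_i$ crosses $r^\pm_i$ transversely, meets $F_\pm\supset r^\pm_1\times r^\pm_2$ transversely in a point (from which it diverges); so by Lemma~\ref{lem:intcrit}, $F_\pm$ coarsely intersects $d_1\times d_2$ in $X$. As for the $c_i$, the quasigeodesics $\Phi(d_i)$ are Hausdorff close to $\Ga_i$-periodic geodesics $d'_i\subset Y'_i$; and since $d_1\times d_2$ and $d'_1\times d'_2$ are both cocompact under $\Stab_{\Ga_1}(d_1)\times\Stab_{\Ga_2}(d_2)$ while $\Phi$ is equivariant, $\Phi(d_1\times d_2)$ is Hausdorff close to the flat $d'_1\times d'_2\subset Y'_1\times Y'_2$. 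Together with $\Phi(F_\pm)$ Hausdorff close to $F'_\pm$, Lemma~\ref{lem:corinv} now gives that $F'_\pm$ coarsely intersects $d'_1\times d'_2$ in $X'$. Since $X'$ is smooth, coarse intersection of flats is equivalent to transversal intersection in a single point, so we obtain a point $q'_\pm\in F'_\pm\cap(d'_1\times d'_2)\subset F'_\pm\cap(Y'_1\times Y'_2)$; in particular $Y'_1\times Y'_2 \cap Z'\times\R\neq\emptyset$.

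The final step is to upgrade $q'_\pm$ to a quadrant. Writing $q'_\pm=(q^\pm_1,q^\pm_2)$ with $q^\pm_i\in Y'_i$, let $\rho^\pm_i\subset Y'_i$ be the ray from $q^\pm_i$ to $\xi'_i$ (it stays in $Y'_i$ by convexity and closedness). Then $\rho^+_i$ and $\rho^-_i$ are asymptotic rays in $Y'_i$ with common ideal endpoint $\xi'_i$, asymptotic to the periodic geodesic $c'_i$, and $\rho^\pm_1\times\rho^\pm_2\subset Y'_1\times Y'_2$ is a quadrant with corner $q'_\pm$ whose ideal boundary is the arc $\xi'_1\xi'_2$ of length $\pihalf$. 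Since $q'_\pm\in F'_\pm$ and $\xi'_1\xi'_2\subset\geo F'_\pm$, every ray issuing from $q'_\pm$ inside this quadrant agrees, by uniqueness of the ray to a given ideal point in a CAT(0) space, with the corresponding ray of the flat $F'_\pm$; letting these rays sweep out the quadrant shows $\rho^\pm_1\times\rho^\pm_2\subset F'_\pm$, hence $\rho^\pm_1\times\rho^\pm_2\subset F'_\pm\cap(Y'_1\times Y'_2)$. This is exactly condition (i) for the configuration in $X'$; conditions (ii) and (iii) were recorded above. Claim~\ref{cl1}, applied to $X'$, therefore asserts that $X'$ is not smooth Riemannian, contradicting the assumption and finishing the argument.

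I expect the crux to be precisely this last manoeuvre: bare nonemptiness of $Y'_1\times Y'_2 \cap Z'\times\R$ is strictly weaker than condition (i), and it is the transversal intersection point produced by the coarse-intersection machinery --- available \emph{only} because $X'$ was assumed smooth --- that anchors a full quadrant asymptotic to $\xi'_1$ and $\xi'_2$ inside $F'_\pm$. The remaining work consists of routine but necessary verifications: that $d'_i$ really is a periodic geodesic contained in $Y'_i$ and that $\Phi(d_1\times d_2)$ is Hausdorff close to $d'_1\times d'_2$ (by the same equivariance argument that was used for the $c_i$), that $d_1\times d_2$ and $F_\pm$ genuinely diverge, and that the factors $Y'_i$ are bona fide smooth surfaces, which rests on $Y'_1\times Y'_2$ being of full dimension in the smooth $4$-manifold $X'$.
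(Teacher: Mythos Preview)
Your argument is correct and follows the same route as the paper: use condition (vii) together with Lemma~\ref{lem:intcrit} to get coarse intersection in $X$, transfer it to $X'$ via Lemma~\ref{lem:corinv}, convert back to a genuine transversal intersection using smoothness of $X'$, and then invoke Claim~\ref{cl1}. You simply supply more detail than the paper does --- in particular the verification that the $Y'_i$ are smooth $2$-dimensional surfaces (which the paper leaves to the Remark after Proposition~\ref{prop:split}) and the explicit construction of the quadrant $\rho^\pm_1\times\rho^\pm_2\subset F'_\pm\cap(Y'_1\times Y'_2)$ from the intersection point (the paper merely asserts that ``conditions (i)--(iii) are satisfied''). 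One small point worth noting: you apply Lemma~\ref{lem:intcrit} in the direction ``$F_\pm$ coarsely intersects $d_1\times d_2$'', which is the cleaner choice since $d_1\times d_2$ --- not $F_\pm$ --- is the flat guaranteed to lie inside the open Riemannian set $\interior(Y_1)\times\interior(Y_2)$; the paper phrases it the other way around.
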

 
\begin{proof}
Suppose that $X'$ is smooth. 
We show that then the intersection 
$Y'_1\times Y'_2 \cap Z'\times\R$ 
must be nonempty. 

Note that there exist $\Ga_i$-periodic geodesics $d'_i\subset Y'_i$ 
with the same stabilizers as the geodesics $d_i$. 
By (vii), 
the periodic flat $d_1\times d_2$ 
transversally intersects the flats $F_{\pm}$ in points 
inside the smooth region 
$\interior(Y_1)\times\interior(Y_2)$. 
Hence, by Lemma \ref{lem:intcrit}, 
$d_1\times d_2$ {\em coarsely} intersects $F_{\pm}$. 
It follows from Lemma \ref{lem:corinv} that $d'_1\times d'_2$ 
coarsely intersects $F'_{\pm}$.
Now we use that $X'$ is  smooth to deduce that 
$d'_1\times d'_2$ intersects $F'_{\pm}$ transversally in a point. 
In particular, 
$Y'_1\times Y'_2 \cap Z'\times\R\neq\emptyset$. 

It follows that conditions (i)-(iii) are satisfied by 
the product subsets $Y'_1\times Y'_2$ and $Z'\times\R$ of $X'$. 
By Claim \ref{cl1}, 
this is a contradiction.
\end{proof}

We have proved:
\begin{theorem}[Obstruction to smooth action]
\label{thm:obst}
If a discrete group $\Ga$ admits a geometric action $\Ga\acts X$ 
on a locally compact CAT(0) space 
satisfying conditions (i)-(vii), 
then $\Ga$ does not act geometrically on any smooth Hadamard 4-manifold.
\end{theorem}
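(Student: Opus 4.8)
The plan is to follow the scaffolding that has already been set up: conditions (i)--(vii) are precisely the hypotheses under which the two preceding claims were proved, so the theorem should amount to assembling those claims together with the quasi-isometry invariance results of Section 2. First I would suppose, for contradiction, that $\Ga$ acts geometrically on a smooth Hadamard 4-manifold $X'$. Since $\Ga$ already acts geometrically on the locally compact CAT(0) space $X$, the standard Milnor--\v Svarc argument gives a $\Ga$-equivariant quasi-isometry $\Phi:X\to X'$ with a quasi-inverse $\Phi'$, which is the map fed into all of the transfer arguments below.

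Next I would run exactly the construction carried out in Section~\ref{sec:nequivsm}: using condition (v) together with Proposition~\ref{prop:split} I obtain a $\Ga_1\times\Ga_2$-invariant convex product subset $Y'_1\times Y'_2\subset X'$ carrying a product action, and using conditions (iv) and (vi) together with Proposition~\ref{prop:uniformHcl} I obtain flats $F'_{\pm},F'_0\subset X'$ Hausdorff close to $\Phi(F_{\pm}),\Phi(F_0)$. The absence of 3-flats in $X$ (and hence, by quasi-isometry invariance of the maximal flat dimension, in $X'$) forces the ideal boundaries of these three flats to form a spherical suspension of three points, so the flats assemble into a convex product subset $Z'\times\R\subset X'$ whose cross section contains an ideal triangle with vertices $\eta'_0,\eta'_\pm$ and sides $l'_\pm,l'_0$. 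Condition (vii) and the periodicity of the $c_i$ let me transfer the quadrants $r^{\pm}_1\times r^{\pm}_2$ to quadrants $r'_1\times r'_2$ inside $c'_1\times c'_2$, and the Tits-geometry bookkeeping already done in the excerpt shows that $\eta'_0$ lands in the interior of the length-$\pihalf$ arc $\xi'_1\xi'_2=\geo r'_1\,\geo r'_2$, so conditions (ii) and (iii) hold for the primed configuration.

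The one genuinely new point --- and the step I expect to be the main obstacle --- is establishing condition (i) for the primed configuration, i.e.\ that $Y'_1\times Y'_2\cap Z'\times\R$ is actually nonempty rather than merely ``asymptotically correct''. This is where the smoothness of $X'$ is used in an essential way, and it is exactly the content of the second \textbf{Claim} in the excerpt: by (vii) the $\Ga_i$-periodic geodesics $d_i\subset\interior(Y_i)$ give a periodic flat $d_1\times d_2$ meeting $F_{\pm}$ transversally inside the Riemannian region $\interior(Y_1)\times\interior(Y_2)$, so Lemma~\ref{lem:intcrit} gives a coarse intersection, Lemma~\ref{lem:corinv} transports the coarse intersection to $d'_1\times d'_2$ and $F'_{\pm}$ in $X'$, and smoothness of $X'$ upgrades coarse intersection back to an honest transversal intersection point, which in particular lies in $Y'_1\times Y'_2\cap Z'\times\R$.

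Having verified conditions (i)--(iii) for the product subsets $Y'_1\times Y'_2$ and $Z'\times\R$ in the smooth space $X'$, I would invoke Claim~\ref{cl1}: those three conditions are incompatible with smoothness of the ambient space, since the half-strip analysis of Lemma~\ref{lem:dim2} forces the flats $F'_{\pm}$ either to coincide or to share a length-$\pi$ arc of ideal boundary of the form $\xi'_1\xi'_2\hat\xi'_i$, and in either case one gets $\tangle(\eta'_+,\eta'_-)<\pi$, contradicting that $\eta'_+$ and $\eta'_-$ are the two endpoints of the boundary geodesic $l'_0$ of an ideal triangle. This contradiction shows no such smooth Hadamard 4-manifold $X'$ can exist, completing the proof.
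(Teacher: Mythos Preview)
Your proposal is correct and follows essentially the same approach as the paper: the theorem is indeed just the assembly of the construction in Section~\ref{sec:nequivsm} (transferring the configuration via $\Phi$, Proposition~\ref{prop:split}, and Proposition~\ref{prop:uniformHcl}) together with the two Claims, and you have identified the steps and their logical dependencies accurately, including the key role of smoothness of $X'$ in upgrading coarse intersection back to an honest transversal point so that condition (i) holds in the primed setting.
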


\begin{remark}
The regularity assumptions can be relaxed. 
The argument works more generally and shows that
$\Ga$ does not act geometrically on locally compact, 
geodesically complete CAT(0) spaces $X'$ without branching geodesics, 
for instance $\mathcal{C}^2$-smooth Hadamard 4-manifolds \cite{thesis}. 
\end{remark}

\section{An example}

In this section,
we consider the geometric actions on 
4-dimensional singular CAT(0) spaces 
suggested by Gromov in the first exercise of \cite{BGS}
and verify that they contain configurations 
satisfying conditions (i-vii). 


Let $\Si$ be a 
closed surface of genus $\geq2$, 
and let 
\begin{equation*}
\beta:V\to\Si\times\Si
\end{equation*}
be a non-trivial finite branched covering 
with branching locus the diagonal $\De_{\Si}\subset\Si\times\Si$. 
Then the group 
\begin{equation*}
\Ga:=\pi_1(V) 
\end{equation*}
admits geometric actions on 4-dimensional singular CAT(0) spaces: 
Let $\pi_V:X\to V$ denote the universal covering, 
and $\pi:=\beta\circ\pi_V:X\to\Si\times\Si$. 
We equip $\Si$ with a hyperbolic metric and pull back 
the corresponding product metric on $\Si\times\Si$ 
to singular metrics on $V$ and $X$. 
In this way the 4-manifold $X$ becomes a CAT(0) space, 
and the deck action 
\begin{equation*}
\Ga\acts X
\end{equation*}
becomes a geometric action. 

Regarding the geometry of $X$, 
note first that 
the {\em singular locus} $\pi^{-1}(\De_{\Si})\subset X$
is a disjoint union of isometrically embedded hyperbolic planes. 
The restriction of $\pi$ to any of them is a universal covering of 
the {\em branching locus} $\De_{\Si}\subset\Si\times\Si$. 

We look for patterns of flats in $X$ which obstruct the existence of 
geometric $\Ga$-actions on Hadamard manifolds, 
as described in sections~\ref{sec:confnotoccsm} and~\ref{sec:nequivsm}. 

The space $X$ contains no 3-dimensional flats, 
but plenty of 2-dimensional ones. 
There are two kinds of them: 
flats disjoint from $\pi^{-1}(\De_{\Si})$, 
and flats which intersect $\pi^{-1}(\De_{\Si})$ 
orthogonally in one or several parallel geodesics. 

Let ${\mathcal F}_0$ denote 
the set of flats disjoint from $\pi^{-1}(\De_{\Si})$. 
There are obvious subfamilies of ${\mathcal F}_0$ 
which occur in convex product subsets of $X$.
Namely, let 
\begin{equation}
\label{eq:deco}
\Si=\Si^+\cup\Si^- 
\end{equation}
be a decomposition of $\Si$ 
into two subsurfaces $\Si^{\pm}$ 
along a finite family of disjoint closed geodesics.  
Then the open product block $\interior(\Si^+\times\Si^-)\subset\Si\times\Si$ 
is disjoint from $\De_{\Si}$, 
and hence the connected components of its inverse image 
$\pi^{-1}(\interior(\Si^+\times\Si^-))$ 
in $X$ are convex subsets isometric to 
$\interior(\widetilde\Si^+\times\widetilde\Si^-)$ 
on which $\pi$ restricts to a universal covering of 
$\interior(\Si^+\times\Si^-)$. 

The other flats in $X$ important for our argument are,
somewhat unexpectedly, 
the flats which intersect $\pi^{-1}(\De_{\Si})$ 
in precisely {\em one} geodesic; 
let us denote the set of these flats by ${\mathcal F}_1$. 
Understanding them 
leads us to considering flat half-planes. 

We define ${\mathcal H}$ as the set of 
{\em injectively} immersed flat half-planes 
$H\subset\Si\times\Si$ 
which intersect the branching locus precisely along their boundary line, 
$H\cap\De_{\Si}=\D H$,  
and are orthogonal to it, $H\perp\De_{\Si}$. 
Furthermore, 
we define $\widetilde{\mathcal H}$ as the set of 
isometrically embedded flat half-planes
$\widetilde H\subset X$ 
such that 
$\widetilde H\cap\pi^{-1}(\De_{\Si})=\D\widetilde H$
and $\widetilde H\perp\pi^{-1}(\De_{\Si})$.
We say that a half-plane $\widetilde H\in\widetilde{\mathcal H}$ 
{\em covers} or is a {\em lift} of a half-plane $H\in{\mathcal H}$ 
if $\pi|_{\widetilde H}$ is a local isometry onto $H$. 
A flat in ${\mathcal F}_1$ is the union of two 
half-planes in $\widetilde{\mathcal H}$ with common boundary line. 

We collect some facts about 
${\mathcal H}$ and $\widetilde{\mathcal H}$ 
needed for our argument. 

If $H\in {\mathcal H}$, 
then $\D H$ is an injectively immersed line in $\De_{\Si}$ 
and therefore of the form 
$\D H=\De_c$ for a nonperiodic simple geodesic $c\subset\Si$. 
It follows that $H\subset c\times c$ because $H$ is flat. 
We also see that half-planes in ${\mathcal H}$ occur in pairs 
of opposite half-planes with common boundary line. 

A half-plane $H\in {\mathcal H}$ lifts to 
a half-plane $\widetilde H\in\widetilde{\mathcal H}$ 
because it is simply-connected 
and the branched covering $\beta$ is a true covering over 
$\Si\times\Si-\De_{\Si}$. 
More precisely, 
for a point $p\in H-\D H$ 
and a lift $\widetilde p$ of $p$ 
there exists a unique lift $\widetilde H$ of $H$ 
with $\widetilde p\in\widetilde H$. 
A lift $\widetilde l\subset\pi^{-1}(\De_{\Si})$ of the boundary line $\D H$ 
extends in several ways 
to a lift $\widetilde H$ of $H$, 
because points close to $\D H$ can be lifted in several ways to points 
close to $\widetilde l$. 
The number of lifts is given by the local branching order
of $\pi$ at $\widetilde l$. 

If $\widetilde H\in\widetilde{\mathcal H}$, 
then its boundary line $\D\widetilde H$ 
projects to an immersed line $\De_c$ in $\De_{\Si}$. 
The geodesic $c\subset\Si$ must be nonperiodic simple, 
because otherwise 
$(\widetilde H-\D\widetilde H)\cap\pi^{-1}(\De_{\Si})\neq\emptyset$. 
Thus, all half-planes in $\widetilde{\mathcal H}$ are lifts 
of half-planes in ${\mathcal H}$. 

If $\widetilde H_1,\widetilde H_2\in\widetilde{\mathcal H}$ 
are distinct half-planes with the same boundary line, 
$\D\widetilde H_1=\D\widetilde H_2$, 
then their projections $H_1,H_2\in{\mathcal H}$ 
either coincide or are a pair of opposite half-planes. 
The local geometry of branched coverings implies, 
that $\widetilde H_1,\widetilde H_2$ have angle $\pi$ 
along their common boundary line 
and their union 
$\widetilde H_1\cup\widetilde H_2$
is a flat in ${\mathcal F}_1$.

We will use the following consequence of this discussion: 
Let $c\times c\subset\Si\times\Si$ be an injectively immersed plane, 
and let $H_{\pm}$ be the half-planes into which it is divided by $\De_c$. 
Then for every lift $\widetilde H_+$ of $H_+$ 
there exist at least two distinct lifts 
$\widetilde H_-^1,\widetilde H_-^2$ of $H_-$ 
with the same boundary line $\D\widetilde H_-^i=\D\widetilde H_+$, 
and the union of any two of the three half-planes 
$\widetilde H_+,\widetilde H_-^1,\widetilde H_-^2$ 
is a flat in ${\mathcal F}_1$. 

\medskip
The flats in ${\mathcal F}_1$ are nonperiodic. 
Nevertheless, 
they are useful for investigating 
geometric $\Ga$-actions on other CAT(0) spaces. 
This is due to the following fact: 

\begin{lemma}\label{lem:limit}
Let $F\in{\mathcal F}_1$. 
Suppose that the nonperiodic simple geodesic 
$\pi(F\cap\pi^{-1}(\De_{\Si}))$ in $\De_{\Si}$ 
is the pointed Hausdorff limit of periodic simple geodesics in $\De_{\Si}$. 
Then $F$ is the pointed Hausdorff limit of $\Ga$-periodic flats in $X$. 
\end{lemma}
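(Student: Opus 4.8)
The plan is to approximate $F$ by $\Gamma$-periodic flats of the kind produced in the discussion of $\mathcal F_0$, namely lifts to $X$ of products $a\times b$ of two \emph{disjoint} closed geodesics of $\Sigma$.

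Identifying $\Delta_\Sigma$ with $\Sigma$, write $c\subset\Sigma$ for the nonperiodic simple geodesic with $\pi(F\cap\pi^{-1}(\Delta_\Sigma))=\Delta_c$, recall that $F=\widetilde H^+\cup\widetilde H^-$ where the two half-planes $\widetilde H^\pm\in\widetilde{\mathcal H}$ are glued along $\widetilde l=F\cap\pi^{-1}(\Delta_\Sigma)$ and cover the two halves $H^\pm\subset c\times c$, and fix $\widetilde p\in\widetilde l$ with $\pi(\widetilde p)=(c(0),c(0))$. By hypothesis there are periodic simple geodesics $c_n\subset\Sigma$ with $c_n\to c$ in the pointed Hausdorff topology based at $c(0)$; since $c$ is nonperiodic and a closed hyperbolic surface carries only finitely many closed geodesics below any given length, $\ell(c_n)\to\infty$.

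The main step — and, I expect, the main obstacle — is to replace the $c_n$ by \emph{disjoint pairs} converging to $c$: for each large $n$ I want disjoint simple closed geodesics $a_n,b_n\subset\Sigma$ with $a_n\to c$ and $b_n\to c$ pointed at $c(0)$. This is a statement purely about simple geodesics on $\Sigma$, and I would prove it using the structure of the simple geodesic $c$ — it is recurrent (e.g.\ a leaf of a minimal sublamination of its closure) or it spirals onto closed geodesics, and in either case one runs along $c$ for a long initial stretch near $c(0)$ and closes up in two controlled, mutually ``parallel'' ways (around whichever closed geodesic $c$ spirals onto, or using a deep near-return of $c$), producing two disjoint, non-freely-homotopic simple closed curves whose geodesic representatives follow $c$ for longer and longer; disjoint simple closed curves have disjoint geodesic representatives, and the representatives inherit pointed closeness to $c$. (Equivalently it suffices to know that $c$ is a pointed limit of \emph{pairwise} disjoint simple closed geodesics, and then put $a_n=c_n$, $b_n=c_{n+1}$.) Making the ``parallel closing up'' uniform across all these configurations is where the work lies.

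Granting this, for each large $n$ the product $a_n\times b_n\subset(\Sigma\times\Sigma)\setminus\Delta_\Sigma$ is an embedded flat torus disjoint from the branching locus. Choose $p_n\in a_n\times b_n$ with $p_n\to(c(0),c(0))$, a lift $\widetilde p_n\to\widetilde p$, and let $G_n$ be the flat through $\widetilde p_n$ lifting $a_n\times b_n$ whose tangent plane at $\widetilde p_n$ is nearest to $T_{\widetilde p}F$ — legitimate since $T_{p_n}(a_n\times b_n)\to T_{(c(0),c(0))}(c\times c)$. As $a_n\times b_n$ misses $\Delta_\Sigma$, the flat $G_n$ is disjoint from $\pi^{-1}(\Delta_\Sigma)$ and $\pi_V|_{G_n}$ is a local isometry onto a closed totally geodesic flat torus $T_n\subset V$; hence $G_n$ is the universal cover of $T_n$, and in particular a $\Gamma$-periodic flat with $\Stab_\Gamma(G_n)\cong\pi_1(T_n)\cong\Z^2$. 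It remains to see $G_n\to F$ pointed Hausdorff. Since $\pi|_F\colon F\to c\times c$ and $\pi|_{G_n}\colon G_n\to a_n\times b_n$ are local isometries, $a_n\times b_n\to c\times c$, and the tangent planes at $\widetilde p_n$ converge to $T_{\widetilde p}F$, any subsequential limit of the $G_n$ (extracted by a standard compactness argument for flats through $\overline{B(\widetilde p,1)}$) is a flat through $\widetilde p$ with tangent plane $T_{\widetilde p}F$, hence equals $F$; so $G_n\cap\overline{B(\widetilde p,R)}\to F\cap\overline{B(\widetilde p,R)}$ for every $R$. The one delicate point is the behaviour near the branch point $\widetilde p$: there the flats $G_n$, though disjoint from $\pi^{-1}(\Delta_\Sigma)$, accumulate on the single crossing line $\widetilde l$ of $F$, and one must check that their near-$\widetilde l$ parts — flat strips skirting $\pi^{-1}(\Delta_\Sigma)$ — converge to $\widetilde H^+\cup\widetilde H^-$; this uses that $F$ singles out a fixed $2$-plane of directions at $\widetilde p$ and that geodesics in $X$ depend continuously on their initial data.
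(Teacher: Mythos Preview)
Your approach has a genuine gap: you implicitly assume that the two half-planes $\widetilde H^{\pm}$ of $F$ project to the two \emph{opposite} halves $H^{\pm}$ of $c\times c$, but for a general $F\in\mathcal F_1$ this need not hold --- the paper explicitly notes that two distinct half-planes in $\widetilde{\mathcal H}$ with common boundary line may project to the \emph{same} half-plane in $\mathcal H$, and the third flat $(F_1\cup F_2)\setminus\interior(\widetilde H)$ used in the application is exactly of this type. In that case your method cannot work: near $\widetilde l$ the space looks like $\R^2\times C_k$ (diagonal $\times$ cone over a circle of length $2\pi k$), and $F$ is $\widetilde l\times(\text{a geodesic through the cone point whose two rays project to the \emph{same} ray of the anti-diagonal})$. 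A lift $G_n$ of $a_n\times b_n$ with $a_n,b_n$ disjoint is locally $\R\times(\text{a geodesic in }C_k\text{ missing the cone point and projecting to a full line }\{t=\mathrm{const}\})$; as $n\to\infty$ such geodesics can only converge to geodesics through the cone point whose two rays project to \emph{opposite} rays downstairs. Hence any subsequential limit of the $G_n$ projects to the full plane $c\times c$, never to a single half-plane, so $G_n\not\to F$.

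The paper avoids this by approximating with $c_n\times c_n$ for a \emph{single} simple closed geodesic $c_n\to c$ --- precisely the intersecting-the-diagonal situation you tried to sidestep. The preimage $\pi^{-1}(c_n\times c_n)$ has cocompact stabilizer and its components split as $(\text{line})\times(\text{tree})$, so every flat in it is a limit of $\Ga$-periodic ones; it then suffices to find flat \emph{squares} $\widetilde Q_n\subset\pi^{-1}(c_n\times c_n)$ approximating $F$. These are built by lifting the two right-angled isosceles triangles $T^n_{\pm}\subset s_n\times s_n$ (for an embedded subsegment $s_n\subset c_n$) determined by base points near $\widetilde q_{\pm}\in\widetilde H^{\pm}$, and gluing along the common hypotenuse $\widetilde s_n$. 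The crucial point is that even when $T^n_+=T^n_-$ downstairs (the half-plane case), the two lifts $\widetilde T^n_{\pm}$ are distinct for large $n$ and, by the local branched-cover geometry, meet at angle $\pi$ along $\widetilde s_n$, so $\widetilde Q_n=\widetilde T^n_+\cup\widetilde T^n_-$ is a genuine flat square. Your ``tangent plane nearest to $T_{\widetilde p}F$'' selection criterion is also problematic at the branch point, but this is secondary to the structural obstruction above.
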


\begin{proof}
We denote $\widetilde l=F\cap\pi^{-1}(\De_{\Si})$. 
Let $(c_n,p_n)\to (c,p)$ be a sequence of 
pointed periodic simple geodesics in $\Si$ 
converging to the nonperiodic simple geodesic $c\subset\Si$ 
with $\pi(\widetilde l)=\De_c$. 
There exist geodesics 
$\widetilde l_n\subset\pi^{-1}(\De_{\Si})$ 
lifting the $c_n$ 
and lifts 
$\widetilde p_n,\widetilde p$ of the base points $p_n,p$ 
such that $(\widetilde l_n,\widetilde p_n)\to (\widetilde l,\widetilde p)$. 
We choose embedded subsegments $s_n\subset c_n$ 
of increasing lengths centered at the base points $p_n$ 
such that also $(s_n,p_n)\to (c,p)$ 
and lifted segments $\widetilde s_n\subset\widetilde l_n$
centered at the $\widetilde p_n$ 
such that $(\widetilde s_n,\widetilde p_n)\to (\widetilde l,\widetilde p)$. 

The main step of the argument is to approximate $F$ 
by isometrically embedded flat squares 
$\widetilde Q_n\subset\pi^{-1}(s_n\times s_n)$ 
with diagonals $\widetilde s_n$, 
$(\widetilde Q_n,\widetilde p_n)\to(F,\widetilde p)$. 
This will imply the assertion 
because isometrically embedded flat squares in 
$\pi^{-1}(c_n\times c_n)$ are contained in $\Ga$-periodic flats. 
Indeed, 
the subsets $\pi^{-1}(c_n\times c_n)\subset X$ 
have cocompact stabilizers in $\Ga$,
and their connected components are convex subsets 
which split as metric products 
of the line with discrete metric trees.
All flats contained in them are limits of $\Ga$-periodic ones. 

To find the squares $\widetilde Q^n$, 
we proceed as follows. 
The flat $F$ is divided by $\widetilde l$ into two half-planes 
$\widetilde H_{\pm}\in\widetilde{\mathcal H}$. 
We will approximate these simultaneously by 
isometrically embedded right-angled isosceles triangles 
$\widetilde T_{\pm}^n\subset\pi^{-1}(s_n\times s_n)$
with sides $\widetilde s_n$. 

Let $\widetilde q_{\pm}\in\widetilde H_{\pm}-\D\widetilde H_{\pm}$
be base points close to $\widetilde p$, 
and let 
$\bar q_{\pm}=\pi(\widetilde q_{\pm})\in c\times c-\De_c$ 
denote their projections. 
There exist sequences of points 
$\bar q_{\pm}^n\in s_n\times s_n-\De_{s_n}$ approximating them, 
$\bar q_{\pm}^n\to \bar q_{\pm}$. 
More precisely, 
we choose them such that they are close to $\De_{p_n}\in\De_{s_n}$ 
intrinsically in $s_n\times s_n$, 
i.e.\ such that the segments $\De_{p_n}\bar q_{\pm}^n\subset s_n\times s_n$. 
Furthermore, 
there exists a sequence of lifts 
$\widetilde q_{\pm}^n\in\pi^{-1}(\bar q_{\pm}^n)$
close to $\widetilde p_n$
such that $\widetilde q_{\pm}^n\to\widetilde q_{\pm}$. 

The injectively immersed square $s_n\times s_n\subset\Si\times\Si$
is divided by $\De_{s_n}$ into two triangles. 
Let $T_{\pm}^n$ be the subtriangle containing $\bar q_{\pm}^n$. 
(Possibly $T_+^n=T_-^n$.)
Since the injectively immersed flat triangles $T_{\pm}^n$ 
meet $\De_{\Si}$
only along their hypotenuses $\De_{s_n}$, 
we can lift them to isometrically embedded flat triangles 
$\widetilde T_{\pm}^n$ in $X$ with hypotenuses $\widetilde s_n$, 
as we could lift the half-planes in ${\mathcal H}$
to half-planes in $\widetilde{\mathcal H}$. 
The lifts are again uniquely determined by the lift 
of one off-hypotenuse point. 
Thus we can choose them such that 
$\widetilde q_{\pm}^n\in\widetilde T_{\pm}^n\subset\pi^{-1}(c_n\times c_n)$. 
Then the pointed triangles $(\widetilde T_{\pm}^n,\widetilde q_{\pm}^n)$
Hausdorff converge to a flat half-plane 
in $\widetilde{\mathcal H}$ with base point $\widetilde q_{\pm}$
and boundary line $\widetilde l$. 
The only such half-plane is $\widetilde H_{\pm}$,
i.e.\ $(\widetilde T_{\pm}^n,\widetilde q_{\pm}^n)
\to(\widetilde H_{\pm},\widetilde q_{\pm})$. 

The two triangles $T_{\pm}^n$ either coincide or have angle $\pi$ 
along their common side $\De_{s_n}$. 
The local geometry of branched coverings implies 
that the lifted triangles $\widetilde T_{\pm}^n$ 
have angle $\pi$ along their common side $\widetilde s_n$. 
(They are distinct for large $n$, 
$\widetilde T_+^n\cap\widetilde T_-^n=\widetilde s_n$.)
Hence their union 
$\widetilde Q^n=\widetilde T_+^n\cup\widetilde T_-^n$
is an embedded flat square in $X$. 
These are the squares we were looking for. 
As desired, 
they satisfy 
$(\widetilde Q^n,\widetilde p_n)\to(F,\widetilde p)$. 
This finishes the proof. 
\end{proof}


Now we describe a configuration in $X$ 
which satisfies conditions (i-vii) 
formulated in sections~\ref{sec:confnotoccsm} and~\ref{sec:nequivsm}. 

We consider a decomposition (\ref{eq:deco}) of $\Si$ 
and choose an injectively immersed geodesic line $c\subset\Si$ 
which intersects $\Si^+\cap\Si^-$ transversally 
in precisely one point $p$. 
The geodesic $c$ is divided by $p$ into the injectively immersed rays 
$r^{\pm}=c\cap\Si^{\pm}$. 
We can arrange our choices 
(of $\Si$, $\Si^{\pm}$ and $c$) 
so that 

(a)
$r^{\pm}$ is asymptotic to a simple closed geodesic 
$c^{\pm}\subset\interior(\Si^{\pm})$, and 

(b)
$c$ is a pointed Hausdorff limit 
of simple closed geodesics $c_n\subset\Si$.

Indeed, 
if $\Si^{\pm}$ and $c^{\pm}$ are chosen appropriately 
then there exists a simple closed curve $a$,  
which intersects $c^+$ and $c^-$ transversally
in one point each 
and $\Si^+\cap\Si^-$ transversally in two points. 
It is divided by its intersection points with $c^{\pm}$ 
into two arcs $a_{+-}$ and $a_{-+}$. 
The concatenations $a_{+-}*nc^-*a_{-+}*nc^+$ 
are freely homotopic to simple closed geodesics $c_n$ which, 
when equipped with suitable base points, 
Hausdorff converge to an injectively immersed line $c$ 
with the desired properties.  

Let $H\in{\mathcal H}$ be the half-plane $H\subset c\times c$ 
with boundary line $\D H=\De_c$ and containing the quadrant $r^+\times r^-$. 
There exist two distinct flats $F_1,F_2\in{\mathcal F}_1$ 
which contain the same lift 
$\widetilde H\in\widetilde{\mathcal H}$ of $H$
(and branch along its boundary line $\D\widetilde H$). 
Their union $F_1\cup F_2$ splits metrically as $Z\times\R$,
and the cross section $Z$ is a degenerate ideal triangle (a tripod). 
By Lemma~\ref{lem:limit}, 
the three flats contained in $Z\times\R$,
i.e.\ $F_1,F_2$ and $(F_1\cup F_2)-\interior(\widetilde H)$, 
are $\Ga$-periodically approximable. 

Let $\widetilde r^+\times\widetilde r^-\subset\widetilde H$ 
be the quadrant lifting $r^+\times r^-$. 
There exists a closed convex product subset $P=Y^+\times Y^-\subset X$ 
such that $\pi|_P$ is a universal covering of $\Si^+\times\Si^-$ 
and $F_j\cap P=\widetilde r^+\times\widetilde r^-$ for $j=1,2$. 

The product subsets $Y^+\times Y^-$ and $Z\times\R$ 
satisfy conditions (i)-(vii). 
Applying Theorem~\ref{thm:obst}, 
we therefore obtain: 
\begin{reptheorem}{mainthm}[Exercise 1 in \cite{BGS}]
Let $V$ be a closed 4-dimensional manifold which admits a non-trivial finite branched covering $\beta:V\rightarrow\Sigma\times\Sigma$ 
over the product of a hyperbolic surface $\Sigma$ with itself 
such that the 
branching locus equals the diagonal $\Delta_\Sigma\subset\Sigma\times\Sigma$. 
Then $V$ admits no smooth Riemannian metric
of nonpositive sectional curvature. 
\end{reptheorem}
\begin{remark}
As in Theorem~\ref{thm:obst}, 
one can relax the regularity assumptions 
and rule out 
the existence of $\mathcal{C}^2$-smooth Riemannian metrics on $V$
\cite{thesis}. 
\end{remark}

\end{document}